\def\F{\mathbf{F}}
\def\Z{\mathbf{Z}}
\def\Q{\mathbf{Q}}
\DeclareMathOperator{\Tr}{Tr}
\DeclareMathOperator{\Ker}{Ker}
\DeclareMathOperator{\Gal}{Gal}
\DeclareMathOperator{\pgcd}{gcd}
\DeclareMathOperator{\rad}{rad}
\theoremstyle{plain}
\newtheorem{theorem}{Theoreme}[section]
\newtheorem{proposition}[theorem]{Proposition}
\newtheorem{lemma}[theorem]{Lemma}
\newtheorem{corollary}[theorem]{Corollary}
\theoremstyle{definition}
\newtheorem{definition}[theorem]{Definition}
\theoremstyle{remark}
\newtheorem{remark}[theorem]{Remark}
\numberwithin{equation}{section}
\begin{document}

\title{Zeta functions of quadratic Artin-Schreier curves in characteristic two}

\begin{abstract}
The aim of this paper is twofold: on one hand we study the invariants of traces of quadratic forms over a finite field of characteristic two. On the other hand, we give results about the zeta functions of certain curves studied by van der Geer and ven der Vlugt.
\end{abstract}

\subjclass[2010]{11G20, 14H05}

\keywords{Zeta functions of curves, quadratic forms and exponential sums over finite fields}

\author{Régis Blache}
\address{LAMIA, Université des Antilles}
\email{regis.blache@univ-antilles.fr}
\author{Timothé Pierre}
\address{Université d'\'Etat d'Haïti}
\email{pierretimothe1979@yahoo.fr}

\date{\today}

\maketitle

\markleft{\sc Blache, Pierre}

\markright{\sc Zeta functions of quadratic Artin-Schreier curves}

\section{Introduction}

We denote by $\F_{2^m}$ a finite field of characteristic $2$, and by 
\[
R:=\sum_{i=0}^d a_ix^{2^i} \in \F_{2^m}[x], a_d\neq 0
\]
a $2$-linearized (or additive) polynomial. We also set $f(x)=xR(x)$.

The family of (non singular, projective) Artin-Schreier curves having an affine equation of the form
\[
C_R~:~y^2+y=xR(x)
\] 
is our main object of study. It was introduced in \cite{vdgvdv}. These curves have beautiful properties, such as being supersingular, or having a large group of automorphisms. Moreover, many examples of maximal curves are of this form \cite{cakoz}. Finally, they also have been used in \cite{vv2} to construct supersingular curves of any genus over a finite field of characteristic two. We call these curves \emph{quadratic} Artin-Schreier curves.

Their study also has numerous applications to information theory: in coding theory their numbers of rational points give the weight enumerators of some Reed-Muler codes, and they can also be used to construct certain binary sequences. 

Here we shall concentrate on their zeta functions. Recall that if $\# C_R(\F_{2^{mn}})$ denotes the number of rational points of the curve $C_R$ over the degree $n$ extension of the base field, its zeta function is defined by
\[
Z(C_R,T)=\exp\left(\sum_{n\geq 1} \# C_R(\F_{2^{mn}}) \frac{T^n}{n}\right)
\]
This is a rational function from Weil(s proof of the Riemann hypothesis for curves; more precisely, its denominator is $(1-T)(1-2^mT)$, and its numerator $L(C_R,T)$, called the $L$-function of the curve, is a polynomial of degree $2g=2^d$, where $g$ denotes the genus of the curve $C_R$.

On the other hand, we consider for any $n\geq 1$ the sum and the $L$-function
\[
S_n(f):=\sum_{x\in \F_{2^{mn}}} (-1)^{\Tr_{\F_{2^{mn}}/\F_2}\circ f(x)},~L(f,T):=\exp\left(\sum_{n\geq 1} S_n(f) \frac{T^n}{n}\right)
\]
It is well-known that we have $\# C(\F_{2^{mn}})=1+2^{mn}+S_n(f)$, which gives the equality
\[
L(C_R,T)=L(f,T)
\]
and the link between the two objects.

Since the function $x\mapsto \Tr_{\F_{2^{mn}}/\F_2}\circ f(x)$ is a quadratic form over the $\F_2$-vector space $\F_{2^{mn}}$, the exponential sum $S_n(f)$ is determined by the isometry class of this form, i.e. by the dimension of the radical of the associated bilinear form, and an invariant $\varepsilon_n(f)$. The radical depends on the solutions of the so-called kernel equation, and is in principle easy to compute once we know the decomposition field of this equation. The invariant $\varepsilon_n(f)$ is finer, and there have been many attempts to compute it in general (see for instance \cite{hou}) or in particular cases (see \cite{fitz} for binomials in $\F_2[x]$ or \cite{oss} for forms with a big radical).

Here we show that all the invariants $\varepsilon_n(f)$ depend on the finite number of those $\varepsilon_d(f)$ for which $d$ divides (twice) the degree of the decomposition field of the kernel polynomial. 

As a consequence, we give a factorization of the $L$-function $L(f,T)$: the factors are almost cyclotomic polynomials, and we express their multiplicities from the above data, namely the dimensions of the radicals, and the invariants. This improves on \cite[Theorems 10.1 and 10.2]{vdgvdv} where the zeta function is determined only over some particular base fields.

Let us describe our methods in a few words. Whereas the proofs in \cite{vdgvdv} are mostly geometric, we reason here in a much more arithmetic (and elementary) way.

The first observation, Proposition \ref{fpL}, is well-known (see \cite{my} for instance). Since the curves are supersingular, the reciprocal roots of the $L$-function are almost roots of unity, and its factors are almost cyclotomic polynomials. If we define the period as the least common multiple of the orders of these roots of unity, then there must be some periodicity in the number of points from the very definition of the zeta function. 

The second observation is Proposition \ref{percar2}. Since we have rather explicit evaluations of the exponential sums associated to quadratic functions, we can determine the period from the knowledge of at most two values of the invariant. 

Once we have observed these two facts, the results follow in a completely elementary way from the properties of some well known arithmetic functions. 

The paper is organized as follows. In section \ref{prel}, we recall (and prove when necessary) some technical results that we use later in the paper. Then in Section \ref{gen}, we determine the period from the degree of the decomposition field of the kernel polynomial and some invariants. In Section \ref{even} and \ref{odd}, we give the main results, respectively in the cases of even and odd $m$: we give the relations between the invariants, determine them in some cases, and express the multiplicities of the cyclotomic factors. Finally, we treat an example associated to the Suzuki curve in the last Section, in order to illustrate the preceding results.

\section{Preliminaries}
\label{prel}

\subsection{Factorization of cyclotomic polynomials}

\label{factcyc}

We first need to determine the factorization of cyclotomic polynomials over the field $\Q(\sqrt{2})$.

For any $n\geq 1$, we set $\zeta_n:=e^{\frac{2i\pi}{n}}$. 

Since we have $\sqrt{2}=\zeta_8+\zeta_8^7$, the field $\Q(\sqrt{2})$ is the subfield of $\Q(\zeta_8)$ fixed by the Galois automorphism defined by $\zeta_8\mapsto \zeta_8^7$. From this observation, we deduce that for $v_2(\ell)<3$, the fields $\Q(\zeta_\ell)$ and $\Q(\sqrt{2})$ are linearly disjoint, and the polynomial $\Phi_\ell(T)$ remains irreducible over $\Q(\sqrt{2})$.

If we have $v_2(\ell)\geq 3$, then $\Q(\sqrt{2})$ is the subfield of $\Q(\zeta_\ell)$ fixed by the subgroup $H$ of $\Gal(\Q(\zeta_\ell)/\Q)$ corresponding to
\[
\left\{k\in (\Z/\ell\Z)^\times,~k\equiv \pm 1\mod 8\right\}
\]
This is the kernel of the following character

\begin{definition}
We denote by $\chi$ the Dirichlet character of modulus $8$ defined by $\chi(3)=\chi(5)=-1$. 
\end{definition}

The action of the group $H$ on the set of primitive $\ell$-th roots of unity has two orbits, namely
\[
\mu_\ell^{\times+}=\{\zeta_\ell^i,~0\leq i\leq \ell-1,~\chi(i)=1\}, ~\mu_\ell^{\times-}=\{\zeta_\ell^i,~0\leq i\leq \ell-1,~\chi(i)=-1\}
\]
As a consequence, the factorization of $\Phi_\ell$ over $\Q(\sqrt{2})$ is 
\[
\Phi_\ell(T)=\Phi_\ell^+(T)\Phi^-_\ell(T),~\Phi_\ell^\pm(T):=\prod_{i,~\chi(i)=\pm 1} (1-\zeta_\ell^i T)=\prod_{\zeta\in \mu_\ell^{\times\pm}} (1-\zeta T)
\]

\subsection{Evaluation of certains sums of roots of unity}

We introduce two families of sums of roots of unity

\begin{definition}
\label{rama}
First, the \emph{Ramanujan sums} \cite{rama}: for any $\ell,n\geq 1$
\[
c_\ell(n):=\sum_{i\in (\Z/\ell\Z)^\times} \zeta_\ell^{ni}
\]
Second,  for any $\ell$ multiple of $8$ and $n$ the sums
\[
\sigma_\ell(n):=\sum_{i\in (\Z/\ell\Z)^\times} \chi(i)\zeta_\ell^{ni}
\]
\end{definition}

If $\varphi$ denotes Euler's totient, and $\mu$ the Möbius function, the Ramanujan sums have the following well known expression, called the von Sterneck arithmetic function 
\begin{equation}
\label{vs}
c_\ell(n)=\mu\left(\frac{\ell}{\pgcd(\ell,n)}\right)\frac{\varphi(\ell)}{\varphi\left(\frac{\ell}{\pgcd(\ell,n)}\right)}
\end{equation}

For the second family of sums, we have the following

\begin{lemma}
\label{sigma}
Write $\ell=2^k\ell'$, $k=v_2(\ell)\geq 3$; then we have 
\[
\sigma_\ell(n)=\left\{\begin{array}{rcl} \chi(\ell'n')2^{k-2}\sqrt{2}c_{\ell'}(n') & \textrm{if} & n=2^{k-3}n',~n'{\rm odd}\\
0 & \textrm{if} & v_2(n)\neq v_2(\ell)-3\end{array}\right.
\]
\end{lemma}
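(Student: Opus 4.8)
The plan is to use the Chinese Remainder Theorem to separate the $2$-primary part of $\ell$, which carries the character $\chi$, from its odd part $\ell'$, which will produce a Ramanujan sum. Write $\ell=2^k\ell'$ with $\pgcd(2^k,\ell')=1$ and fix a Bezout relation $u\ell'+v2^k=1$; then $\frac{1}{\ell}=\frac{u}{2^k}+\frac{v}{\ell'}$, so that $\zeta_\ell^{nj}=\zeta_{2^k}^{nuj}\,\zeta_{\ell'}^{nvj}$. Under the CRT isomorphism $(\Z/\ell\Z)^\times\cong(\Z/2^k\Z)^\times\times(\Z/\ell'\Z)^\times$, $j\mapsto(a,b)$, one has $\chi(j)=\chi(a)$ because $8\mid 2^k$, and the sum factors as
\[
\sigma_\ell(n)=\Big(\sum_{a\in(\Z/2^k\Z)^\times}\chi(a)\,\zeta_{2^k}^{nua}\Big)\Big(\sum_{b\in(\Z/\ell'\Z)^\times}\zeta_{\ell'}^{nvb}\Big)=:G\cdot c_{\ell'}(nv).
\]
Since $\pgcd(v,\ell')=1$ and $\ell'$ is odd, $\pgcd(\ell',nv)=\pgcd(\ell',n')$ in the relevant case $n=2^{k-3}n'$, so by the von Sterneck formula \eqref{vs} the second factor is exactly $c_{\ell'}(n')$. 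Everything then reduces to evaluating the $2$-adic Gauss sum $G$.

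To evaluate $G=\sum_{a}\chi(a)\,\zeta_{2^k}^{wa}$ with $w=nu$ (note $v_2(w)=v_2(n)$ since $u$ is odd), I would group the odd residues $a$ modulo $2^k$ according to their class $r\in\{1,3,5,7\}$ modulo $8$: writing $a=r+8t$ with $0\le t<2^{k-3}$,
\[
G=\sum_{r\in\{1,3,5,7\}}\chi(r)\,\zeta_{2^k}^{wr}\sum_{t=0}^{2^{k-3}-1}\zeta_{2^{k-3}}^{wt}.
\]
The inner geometric sum vanishes unless $2^{k-3}\mid w$, i.e.\ unless $v_2(n)\ge k-3$, and equals $2^{k-3}$ otherwise; this already yields $\sigma_\ell(n)=0$ when $v_2(n)<k-3$. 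When $2^{k-3}\mid w$, write $w=2^{k-3}w''$, so that $\zeta_{2^k}^{wr}=\zeta_8^{w''r}$ and
\[
G=2^{k-3}\sum_{r\in(\Z/8\Z)^\times}\chi(r)\,\zeta_8^{w''r}=2^{k-3}\,\chi(w'')\,\tau(\chi),
\]
where $\tau(\chi)=\sum_r\chi(r)\zeta_8^r$, using that $\chi$ is a \emph{primitive} character modulo $8$ (it does not factor through modulus $4$ since $\chi(5)=-1$), for which $\sum_r\chi(r)\zeta_8^{w''r}=\overline\chi(w'')\tau(\chi)=\chi(w'')\tau(\chi)$ holds for every integer $w''$, with $\chi(w'')=0$ when $w''$ is even.

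Two consequences finish the proof. If $v_2(n)>k-3$ then $w''$ is even, so $\chi(w'')=0$ and $G=0$, which together with the previous case gives the vanishing for all $v_2(n)\ne k-3$. If $v_2(n)=k-3$, write $n=2^{k-3}n'$ with $n'$ odd; then $w''=n'u$ is odd, a direct computation gives $\tau(\chi)=\zeta_8-\zeta_8^3-\zeta_8^5+\zeta_8^7=2\sqrt2$, and hence $G=2^{k-2}\sqrt2\,\chi(n'u)$. The Bezout relation forces $u\ell'\equiv1\pmod 8$, whence $\chi(u)=\chi(\ell')$ and $\chi(n'u)=\chi(\ell'n')$; collecting the factors yields $\sigma_\ell(n)=2^{k-2}\sqrt2\,\chi(\ell'n')\,c_{\ell'}(n')$, as claimed. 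The only genuinely delicate point is the bookkeeping on $2$-adic valuations in the evaluation of $G$: the vanishing for $v_2(n)<k-3$ comes from the geometric sum, whereas the vanishing for $v_2(n)>k-3$ comes instead from the primitivity of $\chi$, equivalently from $\chi$ being supported on odd integers, and keeping these two mechanisms separate is the main thing to get right.
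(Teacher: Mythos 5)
Your proof is correct and takes essentially the same route as the paper: a Bezout/CRT factorization of $\sigma_\ell(n)$ into the Ramanujan sum $c_{\ell'}(n')$ times a twisted sum over $(\Z/2^k\Z)^\times$, which is then evaluated by splitting odd residues modulo $8$, yielding $\tau(\chi)=2\sqrt{2}$ and the sign identity $\chi(u)=\chi(\ell')$ exactly as in the paper. The only cosmetic difference is that the paper disposes of the cases $v_2(n)\in\{k-2,\,k-1,\,\geq k\}$ by three separate orthogonality computations, whereas you handle them at one stroke via the separability of the Gauss sum of the primitive character $\chi$ modulo $8$ (i.e.\ $\chi(w'')=0$ for even $w''$), which is a perfectly valid shortcut.
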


\begin{proof}
We first write Bezout identity $2^ku+\ell'v=1$; from the Chinese remainder theorem, we deduce that can rewrite the sum (recall that $\chi$ is defined modulo $8$)
\begin{eqnarray*}
\sigma_\ell(n) & = & \sum_{a\in (\Z/2^k\Z)^\times}\sum_{b\in (\Z/\ell'\Z)^\times} \chi(2^kub+\ell'va)\zeta_\ell^{n(2^kub+\ell'va)}\\
& = & \chi(\ell') \sum_{a\in (\Z/2^k\Z)^\times}\chi(va)\zeta_{2^k}^{nva}\sum_{b\in (\Z/\ell'\Z)^\times} \zeta_{\ell'}^{nub}\\
& = & \chi(\ell') \sum_{a\in (\Z/2^k\Z)^\times}\chi(a)\zeta_{2^k}^{na}\sum_{b\in (\Z/\ell'\Z)^\times} \zeta_{\ell'}^{nub}\\
\end{eqnarray*}
We recognize that the last sum is the Ramanujan sum $c_{\ell'}(nu)=c_{\ell'}(n)$.

If we write $n=2^tn'$, with $n$ odd, we get that the sum over $(\Z/2^k\Z)^\times$ is equal to 
\begin{itemize}
	\item[(a)] $\sum_{a\in (\Z/2^k\Z)^\times}\chi(a)=0$ if  $t\geq k$;
	\item[(b)] $-\sum_{a\in (\Z/2^k\Z)^\times}\chi(a)=0$ if  $t= k-1$;
	\item[(c)] $\sum_{a\in (\Z/2^k\Z)^\times}\chi(a)i^{n'a}=0$ if  $t= k-2$.
\end{itemize}

If $t\leq k-3$, we set $a=a_0+8a_1$, $a_0\in(\Z/8\Z)^\times$, $a_1\in \Z/2^{k-3}\Z$; then
\[
\sum_{a\in (\Z/2^k\Z)^\times}\chi(a)\zeta_{2^k}^{na}=\sum_{a_0}\chi(a_0)\zeta_{2^{k-t}}^{n'a_0}\sum_{a_1}\zeta_{2^{k-t-3}}^{n'a_1}
\]
The last sum is zero, unless we have $t=k-3$ and then it is equal to $2^{k-3}$. The sum over $a_0$ is equal to $\chi(n')2\sqrt{2}$, and this gives the result.
\end{proof}

\subsection{Some matrices whose entries are arithmetic functions}
\label{evalsum}

We introduce here two sequences of matrices for future use

\begin{definition}
For any integer $n\geq 1$, we set
\[
A(n):=\left(c_\ell(d)\right)_{d,\ell|n},~B(n):=\left(\sigma_\ell(d)\right)_{d,\ell|n}
\]
\end{definition}

The matrix $A(n)$ is invertible: in order to see this, it is sufficient to slightly modify the argument in the proof of \cite[Theorem 9]{apo} to verify that its determinant is the product of the divisors of $n$.

Set $n=2^an'$, with $n'$ odd. Then we can write the matrix $A(n)$ in the following block form
\[
A(n)=(A(n)_{ij})_{0\leq i, j \leq a},~A(n)_{ij}:= \left(c_\ell(d)\right)_{d,\ell|N,~v_2(d)=i,~v_2(\ell)=j}
\]

Using von Sterneck arithmetic function, we have
\begin{equation}
\label{matrixA}
A_{ij}(n) = \left\{ \begin{array}{rcl}
0 & \textrm{if} & i\leq j-2\\
-2^iA(n') & \textrm{if} & i= j-1\\
A(n') & \textrm{if} & j=0\\
2^{j-1}A(n') & \textrm{if} & i\geq j\geq 1\\
\end{array}\right.
\end{equation}

We turn our attention to the matrix $B(n)$. We introduce a diagonal matrix

\begin{definition}
Let $n'$ denote an odd integer. The matrix $\Delta(n')$ is the diagonal matrix whose coefficients are the $\chi(\ell)$, $\ell|n'$.
\end{definition}

We can write it in block form as above, with blocks $B_{ij}(n)$. From the expression in Lemma \ref{sigma}, we see that all blocks are zero, except the blocks $B(n)_{ii+3}$ and that 
\begin{equation}
\label{matrixB}
B(n)_{ii+3}=2^{i-2}\sqrt{2}\Delta(n') A(n')\Delta(n')
\end{equation}

\subsection{Quadratic forms over a finite field of characteristic two}

We recall the classification, up to isometry, of quadratic forms over a finite dimensional $\F_2$-vector space.

Let $q : V \rightarrow \F_2$ denote a quadratic form over a $\F_2$-vector space $V$ of dimension $k$. We associate to $q$ a bilinear form, its polarisation, defined by $b(x,y)=q(x+y)+q(x)+q(y)$; then $b$ is alternate, and it does not depend on the diagonal part of $q$. We no longer have a one to one correspondance between the quadratic forms and the bilinear forms over $V$.

We denote by $\rad b:=V^\perp$ the radical of $b$, and by $c$ its dimension (which has the same parity as $k$). Then there exists a basis $(e_1,\ldots,e_r,e_{r+1},\ldots,e_k)$ of $V$, such that $(e_{r+1},\ldots,e_k)$ is a basis of $\rad b$ and $q$ can be written in (the dual basis of) this basis in one of the following ways \cite[Theorem 6.30]{ln}
\begin{itemize}
	\item[(i)] $q(x)=x_1x_2+\cdots+x_{r-1}x_r+x_{r+1}^2$ 
	\item[(ii)] $q(x)=x_1x_2+\cdots+x_{r-1}x_r$
	\item[(iii)] $q(x)=x_1^2+x_1x_2+x_2^2+\cdots+x_{r-1}x_r$
\end{itemize}

\begin{remark}
\label{zeroinv}
Remark that the first case corresponds to the quadratic forms that are not trivial on the radical of their polarisation, and the last two to the forms having trivial restriction. 
\end{remark}

We now define the invariants that we shall study.

\begin{definition}
To each isometry class, we associate an invariant which is respectively $0,1,-1$ in each of the cases (i), (ii) or (iii) and that we denote by $\varepsilon(q)$. 
\end{definition}

We have the following \cite[Theorem 6.32]{ln}
\begin{proposition}
\label{expquad}
The exponential sum associated to the quadratic form $q$ satisfies
\[
\sum_{x\in V} (-1)^{q(x)}=\varepsilon(q)2^{\frac{k+c}{2}}
\]
\end{proposition}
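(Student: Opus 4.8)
The plan is to compute each of the three exponential sums directly from the normal forms (i)--(iii), using the fact that in every case $q$ is a sum of quadratic forms in pairwise disjoint sets of coordinates. Writing $x=(x_1,\dots,x_k)$ in the dual basis of the basis supplied by the classification, the summand $(-1)^{q(x)}$ factors as a product of terms, each depending on a single block of variables; consequently $\sum_{x\in V}(-1)^{q(x)}$ factors as the corresponding product of partial sums. Here I use that $\rad b$ has dimension $c$, so that the nondegenerate part has dimension $r=k-c$, which is even (since $c$ and $k$ share the same parity), and hence the hyperbolic pairing $x_1x_2+\cdots+x_{r-1}x_r$ is well defined.

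First I would record the elementary building-block sums over $\F_2$ and $\F_2^2$: a hyperbolic pair contributes $\sum_{(u,v)\in\F_2^2}(-1)^{uv}=2$; a coordinate not occurring in $q$ contributes the free factor $\sum_{w\in\F_2}1=2$; the square term contributes $\sum_{w\in\F_2}(-1)^{w^2}=\sum_{w\in\F_2}(-1)^{w}=0$, using $w^2=w$ in $\F_2$; and the anisotropic binary block contributes $\sum_{(u,v)\in\F_2^2}(-1)^{u^2+uv+v^2}=-2$, since $u^2+uv+v^2$ vanishes only at the origin.

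I would then assemble the three cases and check the arithmetic. In case (ii) the sum is the product of $r/2$ hyperbolic factors and the $c$ free factors coming from the radical coordinates, giving $2^{r/2}\cdot 2^{c}=2^{(k+c)/2}$ since $r/2+c=(k-c)/2+c$; this matches $\varepsilon(q)=1$. In case (iii) one hyperbolic block is replaced by the anisotropic one, so a single factor $2$ becomes $-2$, yielding $-2^{(k+c)/2}$ and matching $\varepsilon(q)=-1$. In case (i) the square term $x_{r+1}^2$ on the radical contributes the vanishing factor $0$, so the entire sum is $0=\varepsilon(q)2^{(k+c)/2}$.

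There is no serious obstacle here: the argument is purely computational. The two points that require care are the multiplicativity of $(-1)^{q(x)}$ across the disjoint blocks---the structural fact on which everything rests---and the bookkeeping $r=k-c$ with $r$ even, which simultaneously guarantees that the exponent $(k+c)/2$ is an integer and that the sign of the sum is exactly the invariant $\varepsilon(q)$.
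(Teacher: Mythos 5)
Your proof is correct and complete. Note that the paper does not actually prove this proposition: it simply cites it as \cite[Theorem 6.32]{ln} (Lidl--Niederreiter), so there is no in-paper argument to compare against. Your block-factorization computation is precisely the standard elementary argument underlying that citation: the four building-block sums are all evaluated correctly (hyperbolic pair $\mapsto 2$, free radical coordinate $\mapsto 2$, the diagonal term $x_{r+1}^2$ $\mapsto 0$ via $w^2=w$ in $\F_2$, anisotropic block $u^2+uv+v^2$ $\mapsto -2$ since it vanishes only at the origin), and the bookkeeping $r=k-c$ with $r$ even is exactly what makes the exponent $\frac{k+c}{2}$ an integer and the sign come out as $\varepsilon(q)$. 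The only points left implicit are harmless: multiplicativity of $(-1)^{q(x)}$ over disjoint coordinate blocks is immediate, and in case (i) the classification itself guarantees $c\geq 1$ (the form is nontrivial on the radical), so the vanishing factor is genuinely present.
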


\section{General results on the zeta functions}
\label{gen}

We fix once and for all a $2$-linear polynomial of degree $2^d$, $R:=\sum_{i=0}^d a_ix^{2^i}$ in $\F_{2^m}[x]$, and we set $f(x):=xR(x)$.

We consider the non singular projective curve $C_R$ defined over $\F_{2^m}$ by the affine equation $y^2+y=f(x)$. This is an hyperelliptic curve (equivalently, an Artin-Schreier covering of the projective line, since the characteristic is two) with genus $g=2^{d-1}$. Moreover it is supersingular \cite[Theorem 9.4]{vdgvdv}.

Since the point at infinity of the projective line is totally ramified in the covering, the number of rational points of this curve over the field $\F_{2^{mn}}$ is
\[
\# C_R(\F_{2^{mn}})=1+2^{mn}+\sum_{x\in \F_{2^{mn}}} (-1)^{\Tr_{\F_{2^{mn}}/\F_2}\circ f(x)}=1+2^{mn}+S_n(f)
\]
and the numerator of the zeta function $Z(C_R,T)$ is the $L$-function $L(f,T)$.

In the following, we focus on this last function.

\subsection{First properties of the quadratic forms}

Let us first define our main objects of study

\begin{definition}
For each integer $n\geq 1$, we denote by $q_n$ the quadratic form $q_n:=\Tr_{\F_{2^{mn}}/\F_2}\circ f$ from $\F_{2^{mn}}$ to $\F_2$. We denote by $b_n$ its polarisation, and by $\rad(b_n)$ its radical.

We denote respectively by $c_n(f)$ and $\varepsilon_n(f)$ the codimension of the radical of $b_n$, and its invariant. 
\end{definition}

From Proposition \ref{expquad}, we have
\[
S_n(f)=\varepsilon_n(f)2^{\frac{mn+c_n(f)}{2}}
\]

These forms, and the associated sums, have already been studied in many papers; let us just cite \cite{fitz,hou}. We extract some results from \cite{hou}, that we reprove for completeness.

First about the radical $\rad(b_n)$. To the additive polynomial $R$, we associate another additive polynomial

\begin{definition}
\label{decomp}
The \emph{kernel polynomial} associated to the family of quadratic forms $(q_n)$ is the polynomial
\[
\widetilde{R}:=(R+R^\ast)^{2^d}=\sum_{i=0}^d a_i^{2^d}x^{2^{d+i}}+a_i^{2^{d-i}}x^{2^{d-i}}
\]
where $R^\ast$ is the adjoint of the polynomial $R$.

We denote by $\F_{2^{mN}}$ the decomposition field of $\widetilde{R}$ over $\F_{2^m}$, and by $\Ker\widetilde{R}\subset \F_{2^{mN}}$ the set of roots of this polynomial.

In the following, we set $N=2^aN'$, with $N'$ odd.
\end{definition}

Fix an $n\geq 1$; for any $x,y \in \F_{2^{mn}}$, we have 
\[
b_n(x,y)=\Tr_{\F_{2^{mn}}/\F_2}(xR(y)+R(x)y)=\Tr_{\F_{2^{mn}}/\F_2}(x^{2^d}\widetilde{R}(y))
\] 

As a consequence, since the bilinear form $(x,y)\mapsto \Tr_{\F_{2^{mn}}/\F_2}(xy)$ is non degenerate, we have 
\[
\rad(b_n)=\Ker\widetilde{R}\cap \F_{2^{mn}}
\] 
Since the degree $1$ coefficient of $\widetilde{R}$ is $a_d\neq 0$, this polynomial is separable, all its roots are simple, and we have $2^{c_n(f)}=\deg\pgcd(\widetilde{R},x^{2^{mn}}+x)$.

Note that $\widetilde{R}$ divides $x^{2^{mN}}+x$ and we have $c_N(f)=2d$.

We begin with 

\begin{lemma} \cite[Propositions 3.1 and 3.3]{hou}
\label{houhou}
Notations are as above
\begin{itemize}
	\item[1.] for any $n\geq 1$, we have $c_n(f)=c_{\pgcd(n,N)}(f)$;
	\item[2.] if moreover $v_2(n)>v_2(N)$, then $\varepsilon_n(f)\neq 0$.
\end{itemize}
\end{lemma}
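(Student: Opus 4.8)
The plan is to reduce both statements to the explicit description $\rad(b_n) = \Ker\widetilde{R} \cap \F_{2^{mn}}$ recorded just above, combined with the fact that $\Ker\widetilde{R}$ lies entirely in the decomposition field $\F_{2^{mN}}$; the second statement additionally needs one trace-transitivity computation.

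For the first part I would argue that, since every root of $\widetilde{R}$ lies in $\F_{2^{mN}}$, intersecting with $\F_{2^{mn}}$ only detects the roots lying in $\F_{2^{mN}} \cap \F_{2^{mn}} = \F_{2^{m\pgcd(n,N)}}$. Hence
\[
\rad(b_n) = \Ker\widetilde{R} \cap \F_{2^{mn}} = \Ker\widetilde{R} \cap \F_{2^{m\pgcd(n,N)}} = \rad(b_{\pgcd(n,N)}).
\]
Recalling that $2^{c_n(f)} = \deg\pgcd(\widetilde{R}, x^{2^{mn}}+x) = \#\rad(b_n)$, taking cardinalities gives $c_n(f) = c_{\pgcd(n,N)}(f)$ immediately. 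This part is essentially formal.

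For the second part I would use Remark \ref{zeroinv}: the invariant $\varepsilon_n(f)$ vanishes exactly when $q_n$ restricts non-trivially to $\rad(b_n)$ (case (i)), so it suffices to prove that $q_n$ restricts to the zero map on its radical when $v_2(n) > v_2(N)$. Write $d_0 = \pgcd(n,N)$. By the first part every $\alpha \in \rad(b_n)$ already lies in $\F_{2^{md_0}}$, and since $R$ has coefficients in $\F_{2^m}$ the element $\beta := f(\alpha) = \alpha R(\alpha)$ lies in $\F_{2^{md_0}}$ too. Applying transitivity of the trace along $\F_2 \subset \F_{2^{md_0}} \subset \F_{2^{mn}}$, and the fact that the relative trace of an element of $\F_{2^{md_0}}$ is multiplication by the degree $[\F_{2^{mn}}:\F_{2^{md_0}}] = n/d_0$, I obtain
\[
q_n(\alpha) = \Tr_{\F_{2^{mn}}/\F_2}(\beta) = \frac{n}{d_0}\,\Tr_{\F_{2^{md_0}}/\F_2}(\beta) = \frac{n}{d_0}\, q_{d_0}(\alpha) \pmod 2.
\]
Now the hypothesis $v_2(n) > v_2(N)$ forces $v_2(d_0) = \min(v_2(n),v_2(N)) = v_2(N)$, whence $v_2(n/d_0) = v_2(n) - v_2(N) > 0$, i.e. $n/d_0$ is even. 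Therefore $q_n(\alpha) = 0$ for every $\alpha \in \rad(b_n)$, the restriction is trivial, and $\varepsilon_n(f) \neq 0$.

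The main obstacle, such as it is, lies entirely in the second part: one must recognize that the restriction of $q_n$ to $\rad(b_n)$ is additive (because $b_n$ vanishes there), compare it with the level-$d_0$ datum through the trace tower, and then isolate the parity of the index $n/d_0$. Once that factor $n/d_0 \pmod 2$ is extracted the conclusion is instantaneous; the only point requiring genuine care is verifying that $\beta$ truly descends to $\F_{2^{md_0}}$, so that the relative trace acts simply as multiplication by the degree.
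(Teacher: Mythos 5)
Your proof is correct and follows essentially the same route as the paper: part 1 is the paper's gcd computation $\pgcd(\widetilde{R},x^{2^{mn}}+x)=\pgcd(\widetilde{R},x^{2^{m\pgcd(n,N)}}+x)$ merely rephrased at the level of root sets via $\F_{2^{mn}}\cap\F_{2^{mN}}=\F_{2^{m\pgcd(n,N)}}$, and part 2 is exactly the paper's argument (trace transitivity through $\F_{2^{m\pgcd(n,N)}}$, the relative trace acting as multiplication by the even index $n/\pgcd(n,N)$, then Remark \ref{zeroinv}). No gaps; the one point you flag as delicate, that $f(\alpha)$ descends to $\F_{2^{m\pgcd(n,N)}}$, is handled correctly since $R$ has coefficients in $\F_{2^m}$.
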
 

\begin{proof}
Let $n\geq 1$; then we have
\begin{eqnarray*}
\pgcd(\widetilde{R},x^{2^{mn}}+x) & = &\pgcd(\pgcd(\widetilde{R},x^{2^{mN}}+x),x^{2^{mn}}+x))\\
& = & \pgcd(\widetilde{R},\pgcd(x^{2^{mN}}+x,x^{2^{mn}}+x))\\
& = & \pgcd(\widetilde{R},x^{2^{m\pgcd(n,N)}}+x)\\
\end{eqnarray*}
From the equality $2^{c_n(f)}=\deg\pgcd(\widetilde{R},x^{2^{mn}}+x)$, we deduce the first assertion.

Assume that $v_2(n)>v_2(N)$; then we have $n=d\pgcd(n,N)$ for some even $d$. We have seen that $\rad(b_n)=\rad(b_{\pgcd(n,N)})$; if $x$ lies in this subspace, $f(x)$ is in $\F_{2^{m\pgcd(n,N)}}$, and
\begin{eqnarray*}
q_n(x) & = & \Tr_{\F_{2^{mn}}/\F_2}(f(x))\\
 & = & \Tr_{\F_{2^{m\pgcd(n,N)}}/\F_2}\left(\Tr_{\F_{2^{mn}}/\F_{2^{m\pgcd(n,N)}}}(f(x))\right)\\
& = & \Tr_{\F_{2^{m\pgcd(n,N)}}/\F_2}(df(x))=0 \\
\end{eqnarray*}

We deduce that the restriction of $q_n$ to $\rad(b_n)$ is trivial, and the second assertion from Remark \ref{zeroinv}.

\end{proof}

\subsection{First properties of the {\it L}-function}

We shall study a new function, close to the $L$-function, but with simpler arithmetical properties

\begin{definition}
The modified $L$-function is 
\[
L^\ast(f,T):=L\left(f,\frac{T}{\sqrt{2}^m}\right)
\]
\end{definition}

\begin{remark}
In the same way as the $L$-function comes from the sums $(S_n(f))_{n\geq 1}$, the modified $L$-function comes from the modified sums
\begin{equation}
\label{modsum}
S_n^\ast(f)=(\sqrt{2})^{-mn}S_n(f)=\varepsilon_n(f)2^{\frac{c_n(f)}{2}}=\varepsilon_n(f)2^{\frac{c_{\pgcd(n,N)}(f)}{2}}
\end{equation}

from the first part of Lemma \ref{houhou}.
\end{remark}

We list the first properties of this new function in the following

\begin{proposition}
\label{fpL}
The function $L^\ast(f,T)$ satisfies 
\begin{itemize}
	\item[(i)]  it is a polynomial of degree $2^d$, with coefficients in $\Z[\sqrt{2}]$;
  \item[(ii)] its reciprocal roots are roots of unity.
	\item[(iii)] if $m$ is even, then it has integer coefficients.
\end{itemize}
\end{proposition}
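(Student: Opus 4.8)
The plan is to prove the three assertions in sequence, exploiting the known supersingularity of $C_R$ together with the explicit evaluation of the exponential sums in Proposition \ref{expquad}.

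\medskip

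\emph{Part (i): polynomiality and integrality over $\Z[\sqrt2]$.} First I would recall that $L(f,T)=L(C_R,T)$ is the numerator of the zeta function of $C_R$, hence a polynomial of degree $2g=2^d$ with coefficients in $\Z$, by Weil's theory. Writing $L(f,T)=\prod_{j=1}^{2^d}(1-\alpha_j T)$ with reciprocal roots $\alpha_j$, the substitution $T\mapsto T/\sqrt2^{\,m}$ gives
\[
L^\ast(f,T)=\prod_{j=1}^{2^d}\left(1-\frac{\alpha_j}{\sqrt2^{\,m}}T\right),
\]
so $L^\ast(f,T)$ is again a polynomial of degree $2^d$. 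Its coefficients are those of $L(f,T)$ divided by powers of $\sqrt2^{\,m}$; since the coefficient of $T^k$ in $L(f,T)$ is an integer divided by $\sqrt2^{\,mk}$ after substitution, each coefficient of $L^\ast$ lies in $\Z[1/\sqrt2]=\Z[\sqrt2][1/2]$. To land in $\Z[\sqrt2]$ I would argue via the functional equation for supersingular curves: the Weil bound together with supersingularity forces every $\alpha_j$ to have absolute value $2^{m/2}=\sqrt2^{\,m}$, so each $\alpha_j/\sqrt2^{\,m}$ is an algebraic number of absolute value one, and in fact a root of unity (this is (ii), proved below). Granting that, the reciprocal roots of $L^\ast$ are algebraic integers, hence the symmetric functions appearing as coefficients of $L^\ast$ are algebraic integers; combined with membership in $\Q(\sqrt2)$ this yields coefficients in the ring of integers $\Z[\sqrt2]$ of $\Q(\sqrt2)$.

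\medskip

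\emph{Part (ii): reciprocal roots are roots of unity.} Here I would use supersingularity directly. For a supersingular curve over $\F_{2^m}$ every reciprocal root $\alpha_j$ of the $L$-function satisfies $\alpha_j=\sqrt2^{\,m}\,\omega_j$ where $\omega_j$ is a root of unity; equivalently, some power of the Frobenius acts as a scalar $\pm2^{mk/2}$, so the $\alpha_j$ are all of the form (root of unity) times $\sqrt2^{\,m}$. This is exactly the content of the cited \cite{my} and follows from \cite[Theorem 9.4]{vdgvdv}. Consequently the reciprocal roots $\alpha_j/\sqrt2^{\,m}=\omega_j$ of $L^\ast(f,T)$ are roots of unity, which is the assertion.

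\medskip

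\emph{Part (iii): integrality over $\Z$ when $m$ is even.} When $m$ is even, $\sqrt2^{\,m}=2^{m/2}\in\Z$, so the modified sums $S_n^\ast(f)=2^{-mn/2}S_n(f)=\varepsilon_n(f)2^{c_{\pgcd(n,N)}(f)/2}$ are rational, and in fact the substitution $T\mapsto T/2^{m/2}$ has integer denominators cleared already. More cleanly: the coefficients of $L^\ast$ lie in $\Z[\sqrt2]$ by (i), and they also lie in $\Q$ because for even $m$ the roots $\omega_j$ come in complex-conjugate pairs whose symmetric functions are rational (the $L$-function has real, indeed integer, coefficients and $\sqrt2^{\,m}$ is now rational). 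Since $\Z[\sqrt2]\cap\Q=\Z$, the coefficients are integers.

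\medskip

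I expect the main obstacle to be Part (i), specifically establishing integrality in $\Z[\sqrt2]$ rather than merely in $\Z[1/\sqrt2]$. The naive substitution only guarantees denominators that are powers of $2$; ruling these out requires invoking that the $\alpha_j/\sqrt2^{\,m}$ are algebraic integers (roots of unity), which logically depends on Part (ii). I would therefore prove (ii) first and feed it back into (i). A secondary subtlety is confirming that the coefficients genuinely lie in $\Q(\sqrt2)$ and not a larger field: this follows because the Galois action permuting the $\omega_j$ respects the $\chi$-orbit decomposition $\mu_\ell^{\times+},\mu_\ell^{\times-}$ from Section \ref{factcyc}, so the elementary symmetric functions are fixed by $\Gal(\overline{\Q}/\Q(\sqrt2))$.
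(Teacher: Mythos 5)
Your proposal is correct, and your parts (i) and (iii) actually supply details the paper leaves implicit: the published proof dismisses (i) and (iii) as following ``readily'' from the integrality of $L(f,T)$, whereas you rightly observe that $\Z[\sqrt{2}]$-integrality (as opposed to mere membership in $\Z[\sqrt{2}][1/2]$) genuinely depends on (ii), and your reductions --- symmetric functions of roots of unity are algebraic integers, then $\Z[\sqrt{2}][1/2]\cap\overline{\Z}=\Z[\sqrt{2}]$ and $\Z[\sqrt{2}]\cap\Q=\Z$ --- are exactly the right way to close that gap. The genuine divergence is at (ii). The paper proves it from scratch: each $\alpha_i$ divides $2^m$ (its complement $2^m/\alpha_i$ is again a reciprocal root, hence an algebraic integer), so the $\alpha_i$ are $\ell$-adic units for every odd prime $\ell$; supersingularity, in the Newton-polygon sense, gives $v_2(\alpha_i)=m/2$, so the $\beta_i=\alpha_i/\sqrt{2}^m$ are units at every finite place and hence algebraic integers; since the Riemann hypothesis makes every archimedean conjugate of $\beta_i$ of modulus $1$, Kronecker's theorem \cite{kro} concludes they are roots of unity. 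You instead invoke the structure statement that for a supersingular curve some power of Frobenius acts as a scalar $\pm 2^{mk/2}$, citing \cite{my}. That statement is true and standard, so your argument is not wrong, but be aware that it is essentially equivalent to the assertion being proved: if ``supersingular'' is taken, as in \cite[Theorem 9.4]{vdgvdv} and in the paper's own proof, to mean that all Newton slopes equal $1/2$, then passing from that to ``a power of Frobenius is a scalar'' is precisely the nontrivial content, and the usual bridge is the valuation-plus-Kronecker argument above (or Honda--Tate theory). Your citation buys brevity at the cost of self-containedness; the paper's route is elementary and independent of the deeper structure theory.

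Two smaller remarks. First, your closing worry about the coefficients lying in $\Q(\sqrt{2})$, addressed via Galois orbits and the decomposition of Section \ref{factcyc}, is unnecessary and logically backwards: membership in $\Q(\sqrt{2})$ is immediate from $L^\ast(f,T)=L(f,T/\sqrt{2}^m)$ with $L(f,T)\in\Z[T]$, and the orbit structure of \ref{factcyc} is a consequence one exploits later for the factorization, not an input here. Second, in (i) the phrase ``the Weil bound together with supersingularity forces $|\alpha_j|=2^{m/2}$'' conflates the two hypotheses: the archimedean modulus $2^{m/2}$ of every conjugate is the Riemann hypothesis alone; supersingularity is the $2$-adic input, needed exactly where integrality of the $\beta_i$ is at stake.
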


\begin{proof}
We only show assertion (ii): the other assertions follow readily from the fact that the $L$ function $L(f,T)$ is a polynomial  of degree $2^d$ with integer coefficients. 

The reciprocal roots of the modified $L$-function are the $\beta_i=\alpha_i/\sqrt{2}^m$, $1\leq i\leq 2^d$, where the $\alpha_i$ are the reciprocal roots of the function $L(f,T)$. For any odd prime $\ell$, these numbers are $\ell$-adic units from Weil's proof of the Riemann hypothesis over finite fields.

We consider their $2$-adic valuations. Since the curve is supersingular, we have $v_2(\alpha_i)=m/2$ for all $i$, and we deduce that the $\beta_i$ are $2$-adic units. Thus all the $\beta_i$ are algebraic integers.

Finally, since all conjugates of the $\beta_i$ have complex module $1$, a classical theorem of Kronecker \cite{kro} ensures that they are roots of unity.
\end{proof}

We borrow the following definition to \cite{my}

\begin{definition}
The \emph{period} $D$ of the function $L(f,T)$ is the least common multiple of the orders of the reciprocal roots of the modified $L$ function.
\end{definition}

The period has a simple expression in the degree of the decomposition field of the polynomial $\widetilde{R}$

\begin{proposition}
\label{percar2}
Recall that $\F_{2^{mN}}$ is the decomposition field of $\widetilde{R}$ over $\F_{2^m}$. Then the period satisfies $D\in\{N,2N,4N\}$.

Precisely, we have the following cases
\begin{itemize}
	\item[(i)] if $\varepsilon_N(f)=-1$, then $D=N$;
	\item[(ii)] if $\varepsilon_N(f)=1$, then $\varepsilon_{2N}(f)=-1$, $D=2N$, and all roots orders have dyadic valuation $v_2(N)+1$;
	\item[(iii)] if $\varepsilon_N(f)=0$, we have the following alternative
	
	\begin{itemize}
		\item[(iiia)] if $\varepsilon_{2N}(f)=-1$, then $D=2N$;
		\item[(iiib)] if $\varepsilon_{2N}(f)=1$, then $D=4N$, and all roots orders have dyadic valuation $v_2(N)+2$.
	\end{itemize}
\end{itemize}
\end{proposition}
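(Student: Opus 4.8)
The plan is to translate everything into the language of the reciprocal roots $\beta_1,\dots,\beta_{2^d}$ of $L^\ast(f,T)$, which are roots of unity by Proposition \ref{fpL}(ii), and then to read off their orders. Writing $L^\ast(f,T)=\prod_i(1-\beta_iT)$ and taking logarithmic derivatives gives the Newton relation
\[
S_n^\ast(f)=-\sum_{i=1}^{2^d}\beta_i^n,
\]
so each $S_n^\ast(f)$ is a sum of $2^d$ complex numbers of modulus $1$. Hence $|S_n^\ast(f)|\le 2^d$, with equality if and only if all the $\beta_i^n$ coincide with a common value $\omega$ of modulus $1$; in that \emph{saturated} case $S_n^\ast(f)=-2^d\omega$, and since $S_n^\ast(f)=\varepsilon_n(f)2^{c_n(f)/2}$ is real we get $\omega=\pm1$ with $\omega=-\varepsilon_n(f)$. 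Comparing with \eqref{modsum} and using that, among divisors of $N$, one has $c_g(f)=2d$ only for $g=N$ (because $N$ is the degree of the decomposition field), a direct check gives
\[
|S_n^\ast(f)|=2^d\iff \bigl(N\mid n\ \text{and}\ \varepsilon_n(f)\neq0\bigr).
\]

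For the upper bound I would evaluate at $n=2N$: here $v_2(2N)=a+1>a=v_2(N)$, so $\varepsilon_{2N}(f)\neq0$ by Lemma \ref{houhou}, while $c_{2N}(f)=2d$; thus $n=2N$ is saturated and all $\beta_i^{2N}$ equal a common $\omega_2=-\varepsilon_{2N}(f)\in\{\pm1\}$. If $\omega_2=1$ every $\beta_i$ has order dividing $2N$; if $\omega_2=-1$ every $\beta_i$ has order dividing $4N$ but not $2N$, hence of dyadic valuation $a+2$. In either case $D\mid 4N$. To split into the four announced cases I refine by also testing $n=N$. When $\varepsilon_N(f)\neq0$ (cases (i),(ii)), $n=N$ is saturated and $\beta_i^N=-\varepsilon_N(f)$ for all $i$: if $\varepsilon_N(f)=-1$ then $\beta_i^N=1$ and $D\mid N$, while if $\varepsilon_N(f)=1$ then $\beta_i^N=-1$, which forces $\varepsilon_{2N}(f)=-1$ (from $\sum\beta_i^{2N}=2^d$) and makes every order of dyadic valuation exactly $a+1$. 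When $\varepsilon_N(f)=0$ (case (iii)), $n=N$ is not saturated, so I use $\varepsilon_{2N}(f)$: if it equals $-1$ all orders divide $2N$, and since $\sum\beta_i^N=-S_N^\ast(f)=0$ not all $\beta_i^N$ equal $1$, producing some order of valuation $a+1$; if $\varepsilon_{2N}(f)=1$ all orders have valuation $a+2$ as above. This pins down $v_2(D)$ in every case.

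It remains to produce matching lower bounds, and this is the delicate point. First, the odd part: restricting to the subsequence $n=2^{a+1}s$ (where $\varepsilon_n(f)\neq0$ automatically), the quantity $|S_{2^{a+1}s}^\ast(f)|=2^{c_{2^a\pgcd(s,N')}(f)/2}$ is a genuine function of $\pgcd(s,N')$ whose maximum $2^d$ is attained exactly when $N'\mid s$. Since the peak set $N'\Z$ must be invariant under the period of this subsequence, that period is divisible by $N'$; as the period of $\bigl(S_{2^{a+1}s}^\ast(f)\bigr)_s$ equals $\operatorname{lcm}_i\operatorname{ord}(\beta_i^{2^{a+1}})$, which divides $D$, we conclude $N'\mid D$. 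Second, the dyadic lower bound: in cases (ii), (iiia), (iiib) the exact valuation of $D$ was already read off above, while in case (i) I argue by contradiction—if all $\beta_i$ had order dividing $N/2$ (i.e.\ $v_2(D)<a$), then $S_{N/2}^\ast(f)=-2^d$ would be saturated, forcing $N\mid N/2$ against the minimality of the decomposition field. Combining $N'\mid D$, the exact value of $v_2(D)$, and the upper bound $D\mid 4N$ then yields $D=N,\,2N,\,2N,\,4N$ in cases (i),(ii),(iiia),(iiib) respectively.

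I expect the main obstacle to be the lower bound of the last paragraph: the saturation principle alone controls the $\beta_i^n$ only on the sparse set of saturated exponents, so recovering the \emph{full} order $N$ (rather than a proper divisor) requires both the subsequence/peak argument for the odd part $N'$ and the decomposition-field minimality for the $2$-adic part. The upper bounds and the sign identities such as $\varepsilon_{2N}(f)=-1$ in case (ii) are, by contrast, immediate consequences of saturation.
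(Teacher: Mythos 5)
Your proof is correct, and its engine is exactly the paper's: the Newton relation $S_n^\ast(f)=-\sum_i\beta_i^n$, the triangle-inequality saturation at $n=N$ and $n=2N$ (giving the upper bounds $D\mid N,\,2N,\,4N$, the sign identity $\varepsilon_{2N}(f)=-1$ in case (ii), and the dyadic valuations of the root orders), and the minimality of the decomposition field. Where you diverge is the lower bound. The paper disposes of it in one stroke: since $\beta_i^D=1$ for all $i$, the exponent $n=D$ is itself saturated, so $S_D^\ast(f)=-2^d$ forces $c_D(f)=2d$ and hence $N\mid D$ by minimality of $N$; combined with the upper bounds this pins down $D$ in every case (e.g.\ in case (ii), $N\mid D\mid 2N$ and $D\neq N$ because $\beta_i^N=-1$). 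You instead split the lower bound into an odd part, obtained via the periodicity/peak-set argument on the subsequence $\bigl(S^\ast_{2^{a+1}s}\bigr)_s$, and a $2$-adic part, obtained from the exact valuations of the orders together with a contradiction at $N/2$ in case (i). I checked your detour and it is sound: $N'$ divides every period of the subsequence since the peak set is exactly $N'\Z_{>0}$, and $\operatorname{lcm}_i\operatorname{ord}\bigl(\beta_i^{2^{a+1}}\bigr)$ is such a period dividing $D$. But it is redundant given what you already established: your own saturation criterion $|S_n^\ast(f)|=2^d\iff\bigl(N\mid n\text{ and }\varepsilon_n(f)\neq 0\bigr)$, applied at $n=D$, delivers $N\mid D$ — odd and dyadic parts at once — which is precisely the content of the paper's terse remark that $N$ is the least integer with $c_N(f)=2d$. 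Your case-(i) contradiction at $N/2$ is in fact this very argument applied to one proper divisor, so nothing in principle prevented you from applying it to $D$ directly and deleting the subsequence paragraph; the only thing the longer route buys is an explicit record of a root of order with dyadic valuation $v_2(N)+1$ in case (iiia), which the direct argument also yields for free.
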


\begin{proof}
If we compare the logarithmic derivatives of both sides of the equality
\[
L^\ast(f,T)=\prod_{i=1}^{2^d} (1-\beta_iT)
\]
we see that for any $n\geq 1$ the modified sum can be written from the reciprocal roots of the modified $L$-function as 
\begin{equation}
\label{modsums}
S_n^\ast(f)=-\sum_{i=1}^{2^d} \beta_i^n
\end{equation} 

From the expression (\ref{modsum}) of the sum $S_n^\ast(f)$, and since we have $c_N(f)=2d$ by definition of the decomposition field, we deduce
\[
-\sum_{i=1}^{2^d} \beta_i^N=\varepsilon_N(f)2^d
\]
In the case $\varepsilon_N(f)=-1$, the triangle inequality ensures that $\beta_i^N=1$ for all $i$. Thus $D$ divides $N$. Since $N$ is the least integer with $c_N(f)=2d$, we get assertion (i).

When $\varepsilon_N(f)=1$, we get $\beta_i^N=-1$ and $\beta_i^{2N}=1$ for all $i$. Thus $D$ divides $2N$, and the root orders all have dyadic valuation equal to $v_2(N)+1$. Here again, $N$ is the least integer such that $\left|\sum_{i=1}^{2^d} \beta_i^N\right|=2^d$, and we have $D=2N$.

In the case $\varepsilon_N(f)=0$, we have $\varepsilon_{2N}(f)=\pm 1$ from Lemma \ref{houhou} (ii). Then we conclude as above from the value of $\varepsilon_{2N}(f)$ since we have $c_{2N}(f)=2d$.
\end{proof}

\section{The case of even {\it m}}
\label{even}

In this section, $m$ is even.

In this case, the $\F_2$-vector space $\F_{2^{mn}}$ has even dimension for all $n$, and the corank $c_n(f)$ is even. Thus the modified $L$-function has integer coefficients; it is a product of cyclotomic polynomials from Proposition \ref{fpL} (2), and we write
\begin{equation}
\label{prodcyceven}
L^\ast(f,T)=\prod_{\ell} \Phi_\ell(T)^{m_\ell(f)}
\end{equation}
As a consequence, (\ref{modsums}) gives the following expression for the modified sums, where $c_\ell(n)$ is the Ramanujan sum from Definition \ref{rama}
\begin{equation}
\label{expeven}
-S_n^\ast(f)=\sum_{\ell} m_\ell(f) c_\ell(n).
\end{equation}

From Proposition \ref{percar2}, we deduce
\begin{proposition}
\label{meven}
Assume $m$ is even. The invariants $\varepsilon_n(f)$, $n\geq 1$, satisfy
\begin{itemize}
	\item[(i)] if $\varepsilon_N(f)=-1$, then $\varepsilon_n(f)=\varepsilon_{\pgcd(n,N)}(f)$;
	\item[(ii)] when $\varepsilon_N(f)=1$, we have
	
	\begin{itemize}
		\item if $v_2(n)\leq v_2(N)-1$, then $\varepsilon_n(f)=0$;
		\item if $v_2(n)= v_2(N)$, then $\varepsilon_n(f)=\varepsilon_{\pgcd(n,N)}(f)$;
		\item if $v_2(n)\geq v_2(N)+1$, then $\varepsilon_n(f)=-\varepsilon_{\pgcd(n,N)}(f)$;
	\end{itemize}
	
		\item[(iii)] when $\varepsilon_N(f)=0$, we have the following cases

		\item[(iiia)] if $\varepsilon_{2N}(f)=-1$, then $\varepsilon_n(f)=\varepsilon_{\pgcd(n,2N)}(f)$ for all $n\geq 1$.
		
	\item[(iiib)] if $\varepsilon_{2N}(f)=1$, then 
	
	\begin{itemize}
		\item if $v_2(n)\leq v_2(N)$, then $\varepsilon_n(f)=0$;
		\item if $v_2(n)= v_2(N)+1$, then $\varepsilon_n(f)=\varepsilon_{\pgcd(n,2N)}(f)$;
		\item if $v_2(n)\geq v_2(N)+2$, then $\varepsilon_n(f)=-\varepsilon_{\pgcd(n,2N)}(f)$;
	\end{itemize}

\end{itemize}
\end{proposition}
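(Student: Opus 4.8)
The plan is to exploit the key fact that $\varepsilon_n(f)$ and $c_n(f)$ together determine the modified sum $S_n^\ast(f) = \varepsilon_n(f) 2^{c_{\pgcd(n,N)}(f)/2}$ via (\ref{modsum}), together with the periodicity information extracted in Proposition \ref{percar2}. Since the reciprocal roots $\beta_i$ of $L^\ast(f,T)$ are roots of unity whose orders all divide the period $D$, their $n$-th powers $\beta_i^n$ depend only on $n$ modulo $D$, and more precisely on $\pgcd(n,D)$ after averaging. First I would observe, from (\ref{modsums}), that $-S_n^\ast(f)=\sum_i \beta_i^n$ is a sum of $D$-th roots of unity; to relate $S_n^\ast(f)$ to $S_{\pgcd(n,D)}^\ast(f)$ I would track how raising the $\beta_i$ to the power $n$ interacts with their orders, which in each case of Proposition \ref{percar2} have a controlled dyadic valuation.

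In case (i), where $\varepsilon_N(f)=-1$ and $D=N$, all root orders divide $N$, so $\beta_i^n=\beta_i^{\pgcd(n,N)}$ after using that each order divides $N$ — more carefully, since $\beta_i^N=1$ we have $\beta_i^n = \beta_i^{n \bmod (\text{ord}\,\beta_i)}$, and the sum $\sum_i \beta_i^n$ depends only on $\pgcd(n,N)$ because the collection of orders is a $\Gal(\Q(\zeta_N)/\Q)$-stable multiset. This identifies $S_n^\ast(f)=S_{\pgcd(n,N)}^\ast(f)$, and since $c_n(f)=c_{\pgcd(n,N)}(f)$ by Lemma \ref{houhou}(1), dividing out the (equal) powers of $2$ yields $\varepsilon_n(f)=\varepsilon_{\pgcd(n,N)}(f)$. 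The cleanest way to run this uniformly is to use the cyclotomic factorization (\ref{expeven}): since $-S_n^\ast(f)=\sum_\ell m_\ell(f) c_\ell(n)$ and the von Sterneck formula (\ref{vs}) shows $c_\ell(n)$ depends only on $\pgcd(\ell,n)$, all $\ell$ appearing satisfy $\ell\mid D$, so I can compute $S_n^\ast(f)$ directly from $\pgcd(n,D)$ and compare with $S_{\pgcd(n,D)}^\ast(f)$.

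For cases (ii) and (iiib), where a genuine sign flip occurs, the dyadic-valuation statements in Proposition \ref{percar2} are exactly what I need: every root order has $v_2=v_2(N)+1$ (resp.\ $v_2(N)+2$), so I would split according to $v_2(n)$ relative to this threshold. Writing $D=2N$ (resp.\ $4N$) and each $\beta_i$ of order $\ell$ with $v_2(\ell)=v_2(N)+1$, I would use the Ramanujan-sum behavior: by (\ref{vs}), $c_\ell(n)$ vanishes when $v_2(n)$ is too small to make $\ell/\pgcd(\ell,n)$ squarefree-at-$2$, giving $\varepsilon_n(f)=0$ in the low-valuation range; it reproduces $c_\ell(\pgcd(n,2N))$ in the matching range; and it picks up a sign $\mu(2)=-1$ in the high-valuation range, yielding the $-\varepsilon_{\pgcd(n,2N)}(f)$ flip. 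The precise bookkeeping of $c_\ell(n)/c_\ell(\pgcd(n,D))$ as $v_2(n)$ crosses $v_2(N)+1$ is the technical heart, and I would organize it by factoring out the $2$-part exactly as (\ref{matrixA}) anticipates.

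The main obstacle I anticipate is case (iiia): here $\varepsilon_N(f)=0$ but $\varepsilon_{2N}(f)=-1$ with $D=2N$, and there is \emph{no} uniform dyadic-valuation statement on the root orders — Proposition \ref{percar2}(iiia) gives only $D=2N$. So I cannot simply split by $v_2(n)$; instead I must argue directly from the cyclotomic expansion (\ref{expeven}), showing that since all orders $\ell\mid 2N$ the value $\sum_\ell m_\ell(f)c_\ell(n)$ depends only on $\pgcd(n,2N)$ via von Sterneck, hence $S_n^\ast(f)=S_{\pgcd(n,2N)}^\ast(f)$, and then combining with $c_n(f)=c_{\pgcd(n,2N)}(f)$ (Lemma \ref{houhou}(1), noting $\pgcd(n,2N)$ and $\pgcd(n,N)$ give the same corank) to conclude $\varepsilon_n(f)=\varepsilon_{\pgcd(n,2N)}(f)$. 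The subtlety is ensuring the corank matches on the nose across the $\pgcd(n,N)$ versus $\pgcd(n,2N)$ distinction, which I would verify using that $c_N(f)=c_{2N}(f)=2d$ is already maximal.
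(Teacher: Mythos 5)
Your proposal is correct and follows essentially the same route as the paper's proof: expand $-S_n^\ast(f)=\sum_\ell m_\ell(f)\,c_\ell(n)$ over divisors $\ell$ of the period, apply the von Sterneck formula (\ref{vs}) together with the dyadic-valuation information from Proposition \ref{percar2} in cases (ii) and (iiib), and divide out the matching power $2^{c_{\pgcd(n,N)}(f)/2}$ using Lemma \ref{houhou}(1). Even the case (iiia) you flag as the main obstacle is handled in the paper by precisely the uniform argument you fall back on ($\ell\mid D$ forces $c_\ell(n)=c_\ell(\pgcd(n,D))$), and the corank matching you worry about there follows directly from Lemma \ref{houhou}(1) applied to $\pgcd(n,2N)$, since $\pgcd(\pgcd(n,2N),N)=\pgcd(n,N)$, rather than from maximality of $c_N(f)$.
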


\begin{proof}

This is a consequence of proposition \ref{percar2}, equation (\ref{expeven}) and of the von Sterneck expression for Ramanujan sums (\ref{vs}).

First, when $\ell$ divides $D$, we have $c_\ell(n)=c_{\ell}(\pgcd(D,n))$, that ensures $\varepsilon_n(f)=\varepsilon_{\pgcd(n,D)}(f)$ for all $n\geq 1$. This proves assertions (i) and (iiia).

In case (ii), we have $v_2(\ell)=v_2(N)+1$ for any $\ell$ such that $m_\ell(f)\neq 0$ from Proposition \ref{percar2} (ii). This implies the following equalities
\begin{itemize}
	\item $c_\ell(n)=0$ if $v_2(n)\leq v_2(N)-1$,
	\item  $c_\ell(n)=c_\ell(\pgcd(n,2N))=c_\ell(\pgcd(n,N))$ if $v_2(n)= v_2(N)$, and
	\item  $c_\ell(n)=c_\ell(\pgcd(n,2N))=c_\ell(2\pgcd(n,N))=-c_\ell(\pgcd(n,N))$ else.
\end{itemize}

Case (iiib) is treated as case (ii), noting that $v_2(\ell)=v_2(N)+2$ for all $\ell$ such that $m_\ell(f)\neq 0$.
\end{proof}

\begin{remark}
Let us denote by $\sigma$ the number of divisors function, and set $N=2^aN'$, with $N'$ odd.

We deduce from the preceding result that the knowledge of the family $(S_n(f))_{n\geq 1}$ can be reduced to the knowledge of $\sigma(N)$ of these sums in case (i), $\sigma(2N)$ of these sums in case (iiia) and $\sigma(N')$ in the remaining cases.

These results are in the spirit of \cite[Theorem 1]{my}.
\end{remark}

We end this section with an expression for the multiplicities $m_\ell(f)$ in (\ref{prodcyceven}). 

\begin{proposition}
\label{multeven}
Assume $m$ is even; recall that we have set $N=2^a N'$, $N'$ odd. The multiplicities $m_\ell(f)$ satisfy the following systems, depending on the case from the above Proposition
\begin{itemize}
	\item[(i)] $A(N)(m_\ell(f))_{\ell|N}=(-S_d^\ast(f))_{d|N}$;
	\item[(ii)] $2^aA(N')(m_{2^{a+1}\ell}(f))_{\ell|N'}=(S_{2^{a}d}^\ast(f))_{d|N'}$;
	\item[(iiia)] $A(2N)(m_\ell(f))_{\ell|2N}=(-S_d^\ast(f))_{d|2N}$;
	\item[(iiib)] $2^{a+1}A(N')(m_{2^{a+2}\ell}(f))_{\ell|N'}=(S_{2^{a+1}d}^\ast(f))_{d|N'}$
\end{itemize}
\end{proposition}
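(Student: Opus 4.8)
The plan is to invert the fundamental identity (\ref{expeven}), namely $-S_n^\ast(f)=\sum_\ell m_\ell(f)c_\ell(n)$, by restricting $n$ to run over the appropriate set of divisors so that the relevant coefficient matrix is precisely one of the $A(\cdot)$ matrices introduced in Section \ref{evalsum}. In each of the four cases the key preliminary observation is Proposition \ref{percar2}, which tells us exactly which $\ell$ can occur with nonzero multiplicity: in case (i) every such $\ell$ divides $D=N$; in case (iiia) every such $\ell$ divides $D=2N$; in case (ii) every such $\ell$ divides $D=2N$ and moreover has $v_2(\ell)=a+1$, so $\ell=2^{a+1}\ell'$ with $\ell'\mid N'$; and in case (iiib) every such $\ell$ divides $D=4N$ with $v_2(\ell)=a+2$, so $\ell=2^{a+2}\ell'$ with $\ell'\mid N'$. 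This confines the sum in (\ref{expeven}) to a finite, explicitly indexed set of $\ell$.

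First I would treat case (i). Here $m_\ell(f)=0$ unless $\ell\mid N$, so (\ref{expeven}) reads $-S_n^\ast(f)=\sum_{\ell\mid N} m_\ell(f)c_\ell(n)$ for every $n$; specialising $n$ to the divisors $d\mid N$ gives the linear system $A(N)\,(m_\ell(f))_{\ell\mid N}=(-S_d^\ast(f))_{d\mid N}$, which is exactly assertion (i). Because $A(N)$ is invertible (as noted after its definition, with determinant the product of the divisors of $N$), this system determines the multiplicities uniquely, so no overdetermination issue arises. Case (iiia) is identical with $N$ replaced by $2N$, yielding $A(2N)(m_\ell(f))_{\ell\mid 2N}=(-S_d^\ast(f))_{d\mid 2N}$.

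Next I would do cases (ii) and (iiib), which require the reindexing $\ell=2^{a+1}\ell'$ (resp.\ $\ell=2^{a+2}\ell'$) with $\ell'\mid N'$. Substituting into (\ref{expeven}) and then specialising $n$ to $n=2^a d$ (resp.\ $n=2^{a+1}d$) with $d\mid N'$, I would use the von Sterneck formula (\ref{vs}) to compute the resulting Ramanujan sums. The point is that for $\ell=2^{a+1}\ell'$ and $d\mid N'$ one has $\pgcd(2^ad,2^{a+1}\ell')=2^a\pgcd(d,\ell')$, and the $2$-part of von Sterneck's expression contributes a uniform scalar factor $2^a$ (coming from $\varphi(2^{a+1})/\varphi(2)=2^a$ together with $\mu(2)=-1$ and a compensating sign, giving $c_{2^{a+1}\ell'}(2^ad)=2^a c_{\ell'}(d)$ up to the sign already folded into $S_{2^ad}^\ast$ via Proposition \ref{meven}(ii)), while the odd part reproduces $c_{\ell'}(d)$. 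This is how the factor $2^a A(N')$ and the right-hand side $(S_{2^ad}^\ast(f))_{d\mid N'}$ emerge; case (iiib) is the same computation with $a$ replaced by $a+1$.

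The main obstacle, and the step deserving care, is precisely this last $2$-adic bookkeeping: verifying that the von Sterneck formula (\ref{vs}) evaluated at $c_{2^{a+1}\ell'}(2^ad)$ factors cleanly as a fixed power of $2$ times $c_{\ell'}(d)$, and that the signs match the $-\varepsilon_{\pgcd}$ versus $+\varepsilon_{\pgcd}$ dichotomy of Proposition \ref{meven} so that the right-hand sides appear as $+S_{2^ad}^\ast(f)$ rather than $-S_{2^ad}^\ast(f)$. Here I would rely on the block structure recorded in (\ref{matrixA}), where the diagonal-type blocks are exactly $2^{j-1}A(N')$; reading off the single relevant block ($i=j$, or the appropriate off-diagonal entry) gives the scalar $2^a$ in case (ii) and $2^{a+1}$ in case (iiib) without recomputing the Möbius factors by hand. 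Once the correct block of $A(n)$ is identified with the stated coefficient matrix, the four systems follow immediately.
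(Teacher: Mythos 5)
Your proposal is correct and follows essentially the same route as the paper: specialize the identity (\ref{expeven}) to $n$ running over the divisors of $N$ (resp.\ $2N$), use Proposition \ref{percar2} to confine the support of the multiplicities (all of $N$, resp.\ $2N$, in cases (i)/(iiia); only $\ell=2^{a+1}\ell'$, resp.\ $2^{a+2}\ell'$, with $\ell'|N'$ in cases (ii)/(iiib)), and then read off the relevant block of (\ref{matrixA}). One precision on your hedge ``($i=j$, or the appropriate off-diagonal entry)'': the block the paper uses is the superdiagonal one, $A_{a,a+1}=-2^aA(N')$ (the case $i=j-1$ of (\ref{matrixA})), not the diagonal block $2^{j-1}A(N')$ --- the sign $\mu(2)=-1$ in your von Sterneck computation, giving $c_{2^{a+1}\ell'}(2^ad)=-2^ac_{\ell'}(d)$, is precisely what turns the $-S^\ast$ of (\ref{expeven}) into the $+S^\ast$ on the right-hand sides of (ii) and (iiib), with no sign ``folded into $S^\ast$ via Proposition \ref{meven}'' needed.
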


\begin{proof}
We start with the system consisting of the equations (\ref{expeven}) for $n\geq 1$.

In case (i) (\emph{resp.} (iiia)) of the preceding proposition, this system is equivalent to the system consisting of the same equations, when $n$ runs over the divisors of $N$ (\emph{resp.} $2N$).
Now the corresponding assertions are just the matrix forms of this last system.

In case (ii), we first reduce to the system consisting of the same equations, when $n$ runs over the divisors of $N$. Then we use the block form of the matrix $A$. Since the only non zero multiplicities are the ones with $v_2(\ell)=a+1$, and the only non zero sums $S_n^\ast(f)$ are those with $v_2(n)=a$, the block corresponding to the remaining part of the system is $A_{aa+1}$, and we get the result from its description (\ref{matrixA}).

Case (iiib) is proven the same way, replacing $a$ by $a+1$.
\end{proof}

\section{The case of odd {\it m}}
\label{odd}

We first remark that when $m$ is odd, the equality $c_N(f)=2d$, joint to the fact that the rank  of a quadratic form is an even integer, force $mN$, and $N$ to be even.

We thus write as above $N=2^aN'$, with $N'$ odd and $a\geq 1$.

If $m$ is odd, the modified sums $S_n^\ast(f)$ are no longer integers, but algebraic integers in $\Z[\sqrt{2}]$. The same is true for the coefficients of the modified $L$-function; since its reciprocal roots are roots of unity, we get from \ref{factcyc} a factorization of the form

\[
L^\ast(f,T)=\prod_{\ell,~v_2(\ell)\leq 2} \Phi_\ell(T)^{m_\ell(f)}\prod_{\ell,~v_2(\ell)\geq 3} \Phi_\ell^+(T)^{m_\ell^+(f)} \Phi_\ell^-(T)^{m_\ell^-(f)}
\]
If we take logarithmic derivatives of both sides, we obtain the following expression for the modified sums
\[
-S_n^\ast(f):=\sum_{\ell,~v_2(\ell)\leq 2} m_\ell(f) c_\ell(n)+\sum_{\ell,~v_2(\ell)\geq 3} \left(m_\ell^+(f) c_\ell^+(n)+m_\ell^-(f) c_\ell^-(n)\right)
\]
where we have used the Ramanujan sums, and we have set
\[
c_\ell^\pm(n)=\sum_{i,~\chi(i)=\pm 1} \zeta_\ell^{ni},
\]

We now change the variables: we modify the multiplicities in order to make the sums from Definition \ref{rama} appear.

\begin{definition}
We define the \emph{positive multiplicities} associated to $f$ as
\[
M_\ell^+(f):=\left\{\begin{array}{rcl} m_\ell(f) & \textrm{if} & v_2(\ell)\leq 2 \\ \frac{m_\ell^+(f)+m_\ell^-(f)}{2} & \textrm{if} & v_2(\ell)\geq 3\\ \end{array}\right.
\]
and the \emph{negative multiplicities} associated to $f$ as
\[
M_\ell^-(f):=\left\{\begin{array}{rcl} 0 & \textrm{if} & v_2(\ell)\leq 2 \\ \frac{m_\ell^+(f)-m_\ell^-(f)}{2} & \textrm{if} & v_2(\ell)\geq 3\\ \end{array}\right.
\]
\end{definition}

Since we have $c_\ell^+(n)+c_\ell^-(n)=c_\ell(n)$ and $\sigma_\ell(n)=c_\ell^+(n)- c_\ell^-(n)$ we rewrite the modified sums as
\begin{equation}
\label{expodd}
-S_n^\ast(f):=\sum_{\ell} M_\ell^+(f) c_\ell(n)+\sum_{\ell,~v_2(\ell)\geq 3} M_\ell^-(f)\sigma_\ell(n)
\end{equation}

The rank of a bilinear form is even, and we have $c_n(f)\equiv n \mod 2$. We deduce that the modified sum $S_n^\ast(f)$ is in $\Z$ if $n$ is even, and in $\sqrt{2}\Z$ if $n$ is odd. As a consequence of the evaluations of the sums $c_\ell(n)$ and $\sigma_\ell(n)$ in \ref{evalsum}, we deduce
\begin{equation}
\label{expodd1}
\sum_{\ell} M_\ell^+(f) c_\ell(n)=\left\{\begin{array}{rcl} -S_n^\ast(f) & \textrm{if} & v_2(n)\geq 1 \\ 0 & \textrm{if} & v_2(n)=0\\ \end{array}\right.
\end{equation}

\begin{equation}
\label{expodd2}
\sum_{\ell,~v_2(\ell)\geq 3} M_\ell^-(f)\sigma_\ell(n)=\left\{\begin{array}{rcl} -S_n^\ast(f) & \textrm{if} & v_2(n)=0 \\ 0 & \textrm{si} & v_2(n)\geq 1\\ \end{array}\right.
\end{equation}

The first system is similar to the one given when $m$ is even, and we deduce the following equivalent of Proposition \ref{meven}

\begin{proposition}
\label{moddneven}
Assume $m$ is odd. For any even $n\geq 2$, the invariant $\varepsilon_n(f)$ satisfies
\begin{itemize}
	\item[(i)] if $\varepsilon_N(f)=-1$, then $\varepsilon_n(f)=\varepsilon_{\pgcd(n,N)}(f)$;
	\item[(ii)] if $\varepsilon_N(f)=1$, then 
	
	\begin{itemize}
		\item if $1\leq v_2(n)\leq v_2(N)-1$, then $\varepsilon_n(f)=0$;
		\item if $v_2(n)= v_2(N)$, then $\varepsilon_n(f)=\varepsilon_{\pgcd(n,N)}(f)$;
		\item if $v_2(n)\geq v_2(N)+1$, then $\varepsilon_n(f)=-\varepsilon_{\pgcd(n,N)}(f)$;
	\end{itemize}
	
		\item[(iii)] if $\varepsilon_N(f)=0$, then
		\item[(iiia)] when $\varepsilon_{2N}(f)=-1$, we have $\varepsilon_n(f)=\varepsilon_{\pgcd(n,2N)}(f)$;
		
	\item[(iiib)] when $\varepsilon_{2N}(f)=1$, we have
	
	\begin{itemize}
		\item if $1\leq v_2(n)\leq v_2(N)$, then $\varepsilon_n(f)=0$;
		\item if $v_2(n)= v_2(N)+1$, then $\varepsilon_n(f)=\varepsilon_{\pgcd(n,2N)}(f)$;
		\item if $v_2(n)\geq v_2(N)+2$, then $\varepsilon_n(f)=-\varepsilon_{\pgcd(n,2N)}(f)$.
	\end{itemize}

\end{itemize}
\end{proposition}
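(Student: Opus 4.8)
The plan is to reduce everything to the even-$m$ argument already carried out in Proposition \ref{meven}. The crucial observation is that for \emph{even} $n$ equation (\ref{expodd1}) reads
\[
-S_n^\ast(f)=\sum_{\ell} M_\ell^+(f) c_\ell(n),
\]
which is formally identical to equation (\ref{expeven}) from the even-$m$ case, with the positive multiplicities $M_\ell^+(f)$ playing the role of the $m_\ell(f)$. Since the invariant $\varepsilon_n(f)$ is recovered from $S_n^\ast(f)=\varepsilon_n(f)2^{c_n(f)/2}$ together with the equality $c_n(f)=c_{\pgcd(n,N)}(f)$ of Lemma \ref{houhou}, establishing the stated relations amounts to rerunning the proof of Proposition \ref{meven} on this restricted system.

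First I would check that the support and valuation constraints on the multiplicities carry over. The quantities $M_\ell^+(f)$ are built from the genuine multiplicities $m_\ell(f)$, $m_\ell^\pm(f)$ appearing in the factorization of $L^\ast(f,T)$; each of these is nonzero only when the corresponding cyclotomic (or half-cyclotomic) factor divides $L^\ast(f,T)$, hence only when $\ell$ divides the period $D$. Thus $M_\ell^+(f)\neq 0$ forces $\ell\mid D$, and Proposition \ref{percar2} further pins down $v_2(\ell)$ in each case, for instance $v_2(\ell)=v_2(N)+1$ in case (ii) and $v_2(\ell)=v_2(N)+2$ in case (iiib). These are exactly the constraints used in the even-$m$ argument.

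Next I would carry out the same Ramanujan-sum computations as in Proposition \ref{meven}, using the von Sterneck expression (\ref{vs}). For $\ell\mid N$ one has $c_\ell(n)=c_\ell(\pgcd(n,N))$, which yields $\varepsilon_n(f)=\varepsilon_{\pgcd(n,N)}(f)$ in case (i); in cases (ii), (iiia) and (iiib) the valuation conditions on $\ell$ and on $n$ produce the vanishing, stability, or sign-change of $c_\ell(n)$ recorded in the statement. The one genuine difference from the even-$m$ situation is that here $n$ is restricted to even values, so $v_2(n)\geq 1$ throughout; this is the source of the extra hypotheses ``$1\leq v_2(n)$'' appearing in cases (ii) and (iiib).

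The main point to verify carefully is that the reduction to $\pgcd(n,N)$ (or $\pgcd(n,2N)$) stays inside the regime where (\ref{expodd1}) applies. Since $m$ odd forces $N$ even, and $n$ is assumed even, both $\pgcd(n,N)$ and $\pgcd(n,2N)$ are even; hence the integer equation (\ref{expodd1}) holds for the reduced argument as well, and the comparison of the two sides is legitimate. This is the only place where the restriction to even $n$ is essential: for odd $n$ the integer part of $-S_n^\ast(f)$ vanishes by (\ref{expodd1}), so the present method says nothing and the odd case must instead be extracted from the sums $\sigma_\ell(n)$ via equation (\ref{expodd2}).
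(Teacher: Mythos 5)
Your proposal is correct and follows exactly the route the paper takes: the paper deduces Proposition \ref{moddneven} precisely by observing that for even $n$ the system (\ref{expodd1}) is formally identical to (\ref{expeven}), so the Ramanujan-sum argument of Proposition \ref{meven} (constraints on $v_2(\ell)$ from Proposition \ref{percar2} plus the von Sterneck expression (\ref{vs})) applies verbatim to the positive multiplicities $M_\ell^+(f)$. Your explicit verification that $\pgcd(n,N)$ and $\pgcd(n,2N)$ remain even (since $m$ odd forces $N$ even), so that (\ref{expodd1}) also applies to the reduced argument, is exactly the one detail the paper leaves implicit.
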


On the other hand, we also deduce expressions for the multiplicities as in Proposition \ref{multeven}

\begin{proposition}
\label{multodd+}
Assume $m$ is odd; recall that we have set $N=2^a N'$, $N'$ odd. The multiplicities $M_\ell^+(f)$ satisfy the following systems, depending on the case from the above Proposition
\begin{itemize}
	\item[(i)] $A(N)(M_\ell^+(f))_{\ell|N}=(-S_d^\ast(f))_{d|N}$;
	\item[(ii)] $2^aA(N')(M_{2^{a+1}\ell}^+(f))_{\ell|N'}=(S_{2^{a}d}^\ast(f))_{d|N'}$;
	\item[(iiia)] $A(2N)(M_\ell^+(f))_{\ell|2N}=(-S_d^\ast(f))_{d|2N}$;
	\item[(iiib)] $2^{a+1}A(N')(M_{2^{a+2}\ell}^+(f))_{\ell|N'}=(S_{2^{a+1}d}^\ast(f))_{d|N'}$
\end{itemize}
\end{proposition}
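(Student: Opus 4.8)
The plan is to follow the proof of Proposition~\ref{multeven} verbatim, with the positive multiplicities $M_\ell^+(f)$ in the role of the $m_\ell(f)$ and with the integer-part system (\ref{expodd1}) in the role of (\ref{expeven}). Thus I start from the infinite linear system
\[
\sum_{\ell} M_\ell^+(f)\,c_\ell(n)=\left\{\begin{array}{rcl}-S_n^\ast(f)&\textrm{if}&v_2(n)\ge 1,\\ 0&\textrm{if}&v_2(n)=0,\end{array}\right.
\]
which is nothing but (\ref{expodd1}); recall that it was obtained by projecting (\ref{expodd}) onto the first factor of the decomposition $\Z[\sqrt2]=\Z\oplus\sqrt2\,\Z$, the $\sqrt2$-part (\ref{expodd2}) being reserved for the negative multiplicities. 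Since the matrix $A(n)$ and the multiplicities are integral, this is a genuine system over $\Z$, and the whole argument of Proposition~\ref{multeven} applies once the supports of the unknowns and of the right-hand side are located.

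Next I would determine these supports from Proposition~\ref{percar2}. The $M_\ell^+(f)$ are supported on the $\ell$ dividing the period $D$, and the reduction $c_\ell(n)=c_\ell(\pgcd(n,D))$ coming from the von Sterneck formula (\ref{vs}) collapses the system to the equations indexed by $n\mid D$. In case (i) we have $D=N$, so the square matrix is $A(N)$ and the vector of unknowns is $(M_\ell^+(f))_{\ell\mid N}$; in case (iiia) we have $D=2N$, giving $A(2N)$ and $(M_\ell^+(f))_{\ell\mid 2N}$. The right-hand side is the vector of integer parts from (\ref{expodd1}): it equals $-S_d^\ast(f)$ on the even divisors and $0$ on the odd ones, and this is exactly what the statement records as $(-S_d^\ast(f))_{d\mid N}$ (resp. $(-S_d^\ast(f))_{d\mid 2N}$), since for odd $d$ the sum $S_d^\ast(f)$ lies in $\sqrt2\,\Z$ and contributes nothing to the integer part.

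For cases (ii) and (iiib) I would pass to the block form (\ref{matrixA}). Here Proposition~\ref{percar2} forces all nonzero multiplicities to have $v_2(\ell)=a+1$ (resp. $a+2$), while Proposition~\ref{moddneven} shows the nonzero even sums are concentrated in $v_2(n)=a$ (resp. $a+1$), the equations of top valuation $v_2(n)=a+1$ (resp. $a+2$) being redundant: indeed the relation $\varepsilon_n(f)=-\varepsilon_{\pgcd(n,N)}(f)$ gives $S_{2^{a+1}d}^\ast(f)=-S_{2^a d}^\ast(f)$, which matches the equality $A_{a+1,a+1}=-A_{a,a+1}$ of the corresponding blocks. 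The only surviving block is therefore $A_{a,a+1}(2N)=-2^{a}A(N')$ (resp. $A_{a+1,a+2}(4N)=-2^{a+1}A(N')$) by (\ref{matrixA}); feeding in the right-hand vector $(-S_{2^a d}^\ast(f))_{d\mid N'}$ (resp. $(-S_{2^{a+1}d}^\ast(f))_{d\mid N'}$), which is integral because $a\ge 1$, and clearing the sign yields the stated systems.

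The main obstacle is purely bookkeeping: keeping the $\Z$ and $\sqrt2\,\Z$ parts separate so that the odd-index equations in cases (i) and (iiia) are correctly read as the integer part (hence zero on the right), and, in cases (ii) and (iiib), identifying the redundancy of the top-valuation equations and extracting from (\ref{matrixA}) the single surviving off-diagonal block with the correct power of $2$ and the correct sign. Once these are settled, the invertibility of $A(N')$ (and of $A(N)$, $A(2N)$) guarantees that the systems indeed determine the multiplicities.
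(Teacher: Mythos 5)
Your proposal is correct and is essentially the paper's own proof: the paper offers no separate argument for this proposition, saying only that it is deduced ``as in Proposition \ref{multeven}'' from the integer-part system (\ref{expodd1}), and your write-up is exactly that adaptation --- reduce to the equations indexed by $n\mid D$ via $c_\ell(n)=c_\ell(\pgcd(n,D))$, locate the support of the $M_\ell^+(f)$ and of the nonzero even sums from Propositions \ref{percar2} and \ref{moddneven}, and in cases (ii)/(iiib) extract from (\ref{matrixA}) the single surviving block $A_{a,a+1}=-2^{a}A(N')$ (resp.\ $A_{a+1,a+2}=-2^{a+1}A(N')$), your explicit checks of the redundancy of the top-valuation equations and of the odd-indexed rows (where the right-hand entries must be read as the integer part of $-S_d^\ast(f)$, i.e.\ $0$) being details the paper leaves implicit. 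The only slip is your inessential claim that the multiplicities $M_\ell^{\pm}(f)$ are integral --- a priori they lie only in $\frac{1}{2}\Z$, since $m_\ell^+(f)+m_\ell^-(f)$ need not be even --- but nothing in the argument uses this, because the splitting of (\ref{expodd}) into (\ref{expodd1}) and (\ref{expodd2}) rests only on the $\Q$-linear independence of $1$ and $\sqrt{2}$.
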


We now exploit the second system (\ref{expodd2}). 

We know from Proposition \ref{percar2} that all roots orders $\ell$ divide the period $D$, and $D\in\{N,2N,4N\}$. 

Since the sum $\sigma_\ell(n)$ is zero when $v_2(n)\neq v_2(\ell)-3$, the system (\ref{expodd2}) boils down to the $v_2(D)-2$ following systems
\begin{eqnarray*}
\sum_{\ell|D,~v_2(\ell)= 3} M_\ell^-(f)\sigma_\ell(n)=-S_n^\ast(f),& &v_2(n)=0 \\
\sum_{\ell|D,~v_2(\ell)= i} M_\ell^-(f)\sigma_\ell(n)=0, & & v_2(n)=i-3,~ 4\leq i\leq v_2(D) \\
\end{eqnarray*}
Recall the expression of $\sigma_\ell(n)$ from Lemma \ref{sigma}. 

First assume that $v_2(n)\neq v_2(\ell)-3$. In this case, since $\ell|D$, we have $v_2(\ell)\neq v_2(D)$, and $v_2(\pgcd(n,D))=\min\{v_2(n),v_2(D)\} \neq v_2(\ell)-3$. Thus we have 
\[
\sigma_\ell(\pgcd(n,D))=\sigma_\ell(n)=0
\]

When we have $v_2(n)= v_2(\ell)-3$, we also have $v_2(\pgcd(n,D))=v_2(\ell)-3$ since $\ell$ divides $D$. Moreover, we have $\pgcd(n,D)=2^{v_2(n)}\pgcd(n,N')$, and we deduce from Lemma \ref{sigma} that
\[
\sigma_\ell(\pgcd(n,D))=\chi(n\pgcd(n,N'))\sigma_\ell(n)
\]

When $n$ is odd, we deduce the following relation for the $\varepsilon_n(f)$
\begin{proposition}
\label{moddnodd}
Assume $m$ is odd; recall that we have set $N=2^a N'$, $N'$ odd. For any odd integer $n$, we have the equality
\[
\varepsilon_n(f)=\chi(n\pgcd(n,N'))\varepsilon_{\pgcd(n,N')}(f)
\]
\end{proposition}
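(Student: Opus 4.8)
The plan is to evaluate the second system (\ref{expodd2}) at the two odd arguments $n$ and $\pgcd(n,N')$, use the transformation law for $\sigma_\ell$ established just above the statement to relate the two right-hand sides, and then read off the invariant identity after comparing $2$-adic sizes. The arithmetic backbone is the observation that for odd $n$ one has $\pgcd(n,N)=\pgcd(n,N')$ and $\pgcd(n,D)=\pgcd(n,N')$, since $D\in\{N,2N,4N\}$ has odd part $N'$ and $n$ is prime to $2$; this lets every gcd that appears collapse to $\pgcd(n,N')$.

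First I would specialise (\ref{expodd2}) to odd $n$, i.e.\ $v_2(n)=0$. By Lemma \ref{sigma} the summand $\sigma_\ell(n)$ vanishes unless $v_2(\ell)=v_2(n)+3=3$, and $M_\ell^-(f)=0$ whenever $\ell$ does not divide the period $D$ (all reciprocal roots being roots of unity of order dividing $D$). Hence the system collapses to
\[
-S_n^\ast(f)=\sum_{\ell|D,~v_2(\ell)=3} M_\ell^-(f)\sigma_\ell(n).
\]
Since $\pgcd(n,N')$ is again odd, the identical reduction applies verbatim with $n$ replaced by $\pgcd(n,N')$, giving
\[
-S_{\pgcd(n,N')}^\ast(f)=\sum_{\ell|D,~v_2(\ell)=3} M_\ell^-(f)\sigma_\ell(\pgcd(n,N')).
\]

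Next I would insert the relation $\sigma_\ell(\pgcd(n,D))=\chi(n\pgcd(n,N'))\sigma_\ell(n)$ proved above, using $\pgcd(n,D)=\pgcd(n,N')$ for odd $n$. Substituting this into the second display and pulling the scalar $\chi(n\pgcd(n,N'))$ out of the sum, the right-hand side becomes $\chi(n\pgcd(n,N'))$ times the right-hand side of the first display, so
\[
S_{\pgcd(n,N')}^\ast(f)=\chi(n\pgcd(n,N'))\,S_n^\ast(f).
\]
Finally I would rewrite both modified sums through (\ref{modsum}). For odd $n$, since $\pgcd(n,N)=\pgcd(n,N')$, it gives $S_n^\ast(f)=\varepsilon_n(f)2^{c_{\pgcd(n,N')}(f)/2}$; and since $\pgcd(n,N')$ divides $N$, applying (\ref{modsum}) at the argument $\pgcd(n,N')$ gives $S_{\pgcd(n,N')}^\ast(f)=\varepsilon_{\pgcd(n,N')}(f)2^{c_{\pgcd(n,N')}(f)/2}$ with the very same power of two. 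Cancelling this common nonzero factor leaves $\varepsilon_{\pgcd(n,N')}(f)=\chi(n\pgcd(n,N'))\varepsilon_n(f)$, and multiplying by $\chi(n\pgcd(n,N'))=\pm1$ (note that $n\pgcd(n,N')$ is odd, so $\chi$ of it is a unit squaring to $1$) yields the stated identity.

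The only genuine obstacle is the $2$-adic bookkeeping: one must justify that the sum legitimately restricts to $v_2(\ell)=3$, that $\pgcd(n,D)$ and $\pgcd(n,N)$ both reduce to $\pgcd(n,N')$ for odd $n$, and that the two modified sums carry exactly the same power of two so that the division is clean. Once these checks are in place, no inversion of the linear system is needed and the argument is purely formal, the transformation law for $\sigma_\ell$ doing all the work termwise.
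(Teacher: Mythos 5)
Your proposal is correct and takes essentially the same route as the paper: the paper deduces the proposition from exactly this specialisation of (\ref{expodd2}) to $v_2(n)=0$ (so only $\ell\mid D$ with $v_2(\ell)=3$ survive, by Lemma \ref{sigma}), the transformation law $\sigma_\ell(\pgcd(n,D))=\chi(n\pgcd(n,N'))\sigma_\ell(n)$, and the cancellation of the common factor $2^{c_{\pgcd(n,N')}(f)/2}$ coming from (\ref{modsum}). The only difference is presentational: you make explicit the gcd collapses $\pgcd(n,N)=\pgcd(n,D)=\pgcd(n,N')$ for odd $n$ and the termwise comparison of the two systems, steps the paper leaves implicit in the discussion immediately preceding the statement.
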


On the other hand, if we set $\ell=2^i\ell'$, $\ell'|N'$, we deduce the following rewriting for the $v_2(D)-2$ systems
\begin{eqnarray*}
\sum_{\ell'|N'} M_{8\ell'}^-(f)\sigma_{8\ell'}(n)= -S_n^\ast(f), & & ~n|N'\\
\sum_{\ell'|N'} M_{2^i\ell'}^-(f)\sigma_{2^i\ell'}(2^{i-3}n)= 0, & & ~n|N',~ 4\leq i\leq v_2(D) \\
\end{eqnarray*}

The matrices associated to these systems are the $B(D)_{i-3,i}$ for $3\leq i \leq v_2(D)$  from (\ref{matrixB}), which are invertible.

This proves the following for the negative multiplicities

\begin{proposition}
\label{expmodd}
Recall that we have set $N=2^a N'$. Notations are as above. Then we have
\begin{itemize}
	\item[(1)] for any $\ell$ such that $v_2(\ell)\neq 3$, we have $M_\ell^-(f)=0$.
	\item[(2)] the non zero multiplicities $M_{8\ell'}^-(f)$, $\ell'|N'$ satisfy the system
	\[
	2\sqrt{2}\Delta(N') A(N')\Delta(N')\left(M_{8\ell'}^-(f)\right)_{\ell'|N'}=\left(S_n^\ast(f)\right)_{n|N'}
	\]
\end{itemize}
\end{proposition}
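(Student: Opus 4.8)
The plan is to read assertions (1) and (2) off the linear system (\ref{expodd2}), exploiting the support of the sums $\sigma_\ell(n)$. Before solving anything I would first pin down which negative multiplicities can be non-zero. By the definition of the period and Proposition \ref{percar2}, every reciprocal root of $L^\ast(f,T)$ is a root of unity of order dividing $D\in\{N,2N,4N\}$, so $M_\ell^-(f)=0$ as soon as $\ell\nmid D$; and by its very definition $M_\ell^-(f)=0$ whenever $v_2(\ell)\leq 2$. Hence the only possibly non-zero negative multiplicities are the $M_\ell^-(f)$ with $\ell\mid D$ and $3\leq v_2(\ell)\leq v_2(D)$, and there are none at all when $v_2(D)<3$, in which case both assertions are immediate, (2) reducing to the forced vanishing $S_n^\ast(f)=0$ for odd $n$ coming from (\ref{expodd2}).

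Next I would decouple (\ref{expodd2}) by dyadic valuation. Since $\sigma_\ell(n)=0$ unless $v_2(n)=v_2(\ell)-3$ (Lemma \ref{sigma}), each equation of (\ref{expodd2}) only involves the multiplicities with $v_2(\ell)=v_2(n)+3$. Writing $\ell=2^i\ell'$ and $n=2^{i-3}n'$ with $\ell',n'$ odd, the system therefore splits into the independent blocks indexed by $i=3,\ldots,v_2(D)$: the $i$-th block has unknowns $\left(M_{2^i\ell'}^-(f)\right)_{\ell'\mid N'}$ and one equation for each odd divisor $n'\mid N'$, the odd part of $D$ being $N'$ in all three cases. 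Reading its entries off Lemma \ref{sigma}, the matrix of this block is a non-zero scalar multiple of $\Delta(N')A(N')\Delta(N')$; the decisive structural point is that this matrix is invertible, because $A(N')$ is invertible (its determinant is the product of the divisors of $N'$) and $\Delta(N')$ is diagonal with entries $\chi(\ell)=\pm1$, hence its own inverse.

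The two assertions now follow from linear algebra. For each $i$ with $4\leq i\leq v_2(D)$ the right-hand side of the block vanishes, since the corresponding equations of (\ref{expodd2}) have $v_2(n)=i-3\geq 1$; invertibility of the block then forces $M_{2^i\ell'}^-(f)=0$ for all $\ell'\mid N'$, i.e.\ $M_\ell^-(f)=0$ whenever $v_2(\ell)\geq 4$. Combined with the vanishing for $v_2(\ell)\leq 2$ recorded above, this is exactly assertion (1). Only the block $i=3$ survives; its equations are the $v_2(n)=0$ equations of (\ref{expodd2}), its matrix is $2\sqrt2\,\Delta(N')A(N')\Delta(N')$, and its right-hand side is the column of the values $S_n^\ast(f)$ for odd $n\mid N'$. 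This is precisely the system in assertion (2), which moreover determines the $M_{8\ell'}^-(f)$ uniquely by invertibility.

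The one step requiring care is the bookkeeping in the decoupling: I must check that, for $\ell\mid D$ with $v_2(\ell)=i$ fixed, letting $n'$ range over the odd divisors of $N'$ (through $n=2^{i-3}n'$) produces a \emph{square} system whose unknowns are exactly the $M_{2^i\ell'}^-(f)$, $\ell'\mid N'$, with no coupling to other values of $i$. This is guaranteed by the vanishing range of $\sigma_\ell$ in Lemma \ref{sigma} together with the constancy of the odd part of $D$ across $D\in\{N,2N,4N\}$; granting it, the rest is a routine inversion over $\Q(\sqrt2)$.
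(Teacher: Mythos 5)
Your proposal is correct and takes essentially the same route as the paper: the paper likewise decouples the system (\ref{expodd2}) by dyadic valuation via the support condition of Lemma \ref{sigma}, obtains the $v_2(D)-2$ square blocks whose matrices are the invertible $B(D)_{i-3,i}$ of (\ref{matrixB}), kills the homogeneous blocks $4\leq i\leq v_2(D)$ to get assertion (1), and keeps the $i=3$ block as the system of assertion (2). Your extra bookkeeping (the degenerate case $v_2(D)<3$, and invertibility of $\Delta(N')A(N')\Delta(N')$ via $\det A(N')\neq 0$ and $\Delta(N')^2=\I$) only makes explicit what the paper leaves implicit; note that the sign of the right-hand side ($-S_n^\ast(f)$ as produced by (\ref{expodd2}) versus $+S_n^\ast(f)$ in the statement) is glossed over in your write-up exactly as in the paper itself.
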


When the period is not divisible by $8$, or when the roots orders have dyadic valuation different from $3$, we see that the negative multiplicities are all equal to zero, and we deduce the following cancellations

\begin{corollary}
Assume $m$ is odd. For any odd $n\geq 1$, the sums $S_n(f)$ are zero and the modified $L$-function has integer coefficients when
\begin{itemize}
	\item[(a)] we have $v_2(D)\leq 2$;
	\item[(b)] we are in case (ii) and $v_2(N)=v_2(D)-1\neq 2$;
	\item[(c)] we are in case (iiib) and $v_2(N)=v_2(D)-2\neq 1$;	
\end{itemize}
where the cases are those of Proposition \ref{percar2}.
\end{corollary}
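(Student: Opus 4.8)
The plan is to reduce the entire statement to a single fact: that under each hypothesis all the negative multiplicities $M_\ell^-(f)$ vanish. Once this is granted, both conclusions are immediate. For odd $n$ the system (\ref{expodd2}) reads $-S_n^\ast(f)=\sum_{\ell,\,v_2(\ell)\geq 3} M_\ell^-(f)\sigma_\ell(n)$, which is zero, so $S_n^\ast(f)=0$ and hence $S_n(f)=(\sqrt{2})^{mn}S_n^\ast(f)=0$. For the coefficients, $M_\ell^-(f)=0$ forces $m_\ell^+(f)=m_\ell^-(f)$ for every $\ell$ with $v_2(\ell)\geq 3$, so grouping the corresponding factors of $L^\ast(f,T)$ gives $\Phi_\ell^+(T)^{m_\ell^+(f)}\Phi_\ell^-(T)^{m_\ell^-(f)}=\Phi_\ell(T)^{m_\ell^+(f)}$; the modified $L$-function then becomes a product of genuine cyclotomic polynomials and therefore has integer coefficients.

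It thus remains to prove the vanishing of the negative multiplicities in cases (a), (b), (c). Here I would first recall Proposition \ref{expmodd} (1), which already gives $M_\ell^-(f)=0$ whenever $v_2(\ell)\neq 3$; so only the orders $\ell$ with $v_2(\ell)=3$, corresponding to reciprocal roots whose order is exactly divisible by $8$, can possibly contribute. The strategy is then to show that under each hypothesis no root order $\ell$ satisfies $v_2(\ell)=3$, which makes even these remaining multiplicities empty.

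For case (a) the hypothesis $v_2(D)\leq 2$ says the period is not divisible by $8$; since every root order divides $D$, we get $v_2(\ell)\leq v_2(D)\leq 2<3$ for all of them, so none has $v_2(\ell)=3$. For cases (b) and (c) I would instead invoke the precise dyadic valuations recorded in Proposition \ref{percar2}. In case (ii) all root orders satisfy $v_2(\ell)=v_2(N)+1$, so $v_2(\ell)=3$ would require $v_2(N)=2$, excluded by the hypothesis $v_2(N)\neq 2$. In case (iiib) all root orders satisfy $v_2(\ell)=v_2(N)+2$, so $v_2(\ell)=3$ would require $v_2(N)=1$, excluded by $v_2(N)\neq 1$. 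In each situation no order has $v_2(\ell)=3$, and combined with Proposition \ref{expmodd} (1) every $M_\ell^-(f)$ vanishes, completing the argument.

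I do not expect a genuine analytic obstacle: all the necessary input, namely the evaluation of $\sigma_\ell(n)$ in Lemma \ref{sigma}, the structure of the period in Proposition \ref{percar2}, and the support statement of Proposition \ref{expmodd}, is already in place, so the proof is essentially bookkeeping on $2$-adic valuations of root orders. The one point to keep straight is that the relations $v_2(N)=v_2(D)-1$ in (b) and $v_2(N)=v_2(D)-2$ in (c) are automatic in cases (ii) and (iiib) respectively, where $D=2N$ and $D=4N$; so the effective hypothesis in both cases is simply $v_2(D)\neq 3$, i.e. that no reciprocal root has order exactly divisible by $8$, which is exactly the uniform condition separating the three cases.
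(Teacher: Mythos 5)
Your proposal is correct and follows essentially the same route as the paper, which deduces the corollary from the observation immediately preceding it: Proposition \ref{expmodd}(1) kills $M_\ell^-(f)$ for $v_2(\ell)\neq 3$, and the valuation data of Proposition \ref{percar2} rule out root orders with $v_2(\ell)=3$ in each of cases (a), (b), (c), whence $S_n^\ast(f)=0$ for odd $n$ by (\ref{expodd2}) and the pairing $\Phi_\ell^+\Phi_\ell^-=\Phi_\ell$ gives integrality. Your closing remark that the hypotheses in (b) and (c) amount uniformly to $v_2(D)\neq 3$ (since $D=2N$ and $D=4N$ there) is an accurate and worthwhile clarification of why the paper states them as it does.
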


\section{An example and an application around the Suzuki curve}
\label{appli}

In this section, we fix an integer $h\geq 1$, and we set $q_0:=2^h$, $q:=2^{2h+1}=2q_0^2$.

We consider the polynomial $f(x)=x^{q_0}(x^q+x)$ in the following, and we determine all sums $S_n(f)$, $n\geq 1$ from a finite number of them, in order to illustrate the results described above. 

Note that the polynomial $f$ comes from the well-known Suzuki curve, defined over $\F_q$ by the equation
\begin{equation}
\label{suzu}
S_h:~y^q+y=x^{q_0}(x^q+x)
\end{equation}
The number of rational points of this curve over any extension of $\F_q$ is given in \cite[Proposition 4.3]{han}. Actually this curve is defined over the base field $\F_2$, and as an application of the preceding results, we give the number of its rational points over any field $\F_{2^n}$.

In order to do this, we determine the sums 
\[
S_n(f):=\sum_{x\in \F_{2^n}}(-1)^{\Tr_{\F_{2^n}/\F_2}(f(x))} 
\] 
for any $n\geq 1$.

First remark the following fact: since we have $x^{q_0+q}=(x^{1+2q_0})^{q_0}$, for any $x\in \F_{2^n}$, we have
\[
\Tr_{\F_{2^n}/\F_2}(f(x))=\Tr_{\F_{2^n}/\F_2}(x^{q_0+q}+x^{q_0+1})=\Tr_{\F_{2^n}/\F_2}(x^{2q_0+1}+x^{q_0+1})=\Tr_{\F_{2^n}/\F_2}(xR(x))
\]
where we have set $R(x):=x^{2q_0}+x^{q_0}$. With this additive polynomial, we obtain
\[
\widetilde{R}(x)=x^{2q}+x^q+x^2+x=(x^q+x)\circ(x^2+x)
\]
The roots of this polynomial form a $\F_2$-vector space of dimension $2h+2$ that contains $\F_q$ and $\F_4$. Since $2h+1$ is odd, this is the sub-vector space of $\F_{q^2}$ generated by $\F_q\cup\F_4$. We deduce easily the following

\begin{lemma}
\label{suzker}
Notations are as above. The decomposition field of $\widetilde{R}$ is $\F_{q^2}$, and we have for all $n\geq 1$
\[
c_n(f)=\left\{\begin{array}{rl} \pgcd(n,2h+1) & \textrm{if $n$ is odd} \\
\pgcd(n,2h+1)+1 & \textrm{if $n$ is odd} \\ \end{array}\right.
\]
\end{lemma}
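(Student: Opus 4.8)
The plan is to reduce the whole statement to a dimension count: by the discussion in Section \ref{gen} we have $2^{c_n(f)}=\#\left(\Ker\widetilde{R}\cap\F_{2^n}\right)$, so I must compute $\dim_{\F_2}\left(\Ker\widetilde{R}\cap\F_{2^n}\right)$ for each $n$. The key structural observation, which I would state first, is that the factorization $\widetilde{R}=(x^q+x)\circ(x^2+x)$ identifies the root space with an Artin--Schreier preimage. Writing $\wp(x):=x^2+x$, the equation $\widetilde{R}(x)=0$ reads $\wp(x)^q+\wp(x)=0$, i.e. $\wp(x)\in\F_q$; hence $V:=\Ker\widetilde{R}=\wp^{-1}(\F_q)$, an $\F_2$-vector space of dimension $(2h+1)+\dim\Ker\wp=2h+2$, as already noted.

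For the decomposition field I would argue as follows. We already know $V\subseteq\F_{q^2}$ and that $V$ contains $\F_q$ and $\F_4$. The compositum of $\F_q=\F_{2^{2h+1}}$ and $\F_4=\F_{2^2}$ is $\F_{2^{\ppcm(2h+1,2)}}$, and since $2h+1$ is odd we have $\pgcd(2h+1,2)=1$, so $\ppcm(2h+1,2)=2(2h+1)$ and the compositum is exactly $\F_{q^2}$. The smallest field containing all the roots of $\widetilde{R}$ lies between this compositum and $\F_{q^2}$, hence equals $\F_{q^2}$; this gives $N=4h+2$.

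For the formula itself, set $g:=\pgcd(n,2h+1)$, so that $\F_q\cap\F_{2^n}=\F_{2^g}$; note $g$ is always odd, as it divides $2h+1$. Then $V\cap\F_{2^n}=\{x\in\F_{2^n}:\wp(x)\in\F_{2^g}\}$. Since the restriction of $\wp$ to $\F_{2^n}$ has kernel $\F_2$ and image the trace-zero hyperplane $\{z:\Tr_{\F_{2^n}/\F_2}(z)=0\}$, counting fibres gives
\[
\#\left(V\cap\F_{2^n}\right)=2\cdot\#\left\{z\in\F_{2^g}:\Tr_{\F_{2^n}/\F_2}(z)=0\right\}.
\]
The final ingredient is transitivity of the trace: for $z\in\F_{2^g}$ one has $\Tr_{\F_{2^n}/\F_2}(z)=\frac{n}{g}\,\Tr_{\F_{2^g}/\F_2}(z)$, whose value in $\F_2$ depends only on the parity of $n/g$. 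If $n$ is odd then $n/g$ is odd, the trace coincides with $\Tr_{\F_{2^g}/\F_2}$ and vanishes on exactly half of $\F_{2^g}$, yielding $\#\left(V\cap\F_{2^n}\right)=2^g$ and $c_n(f)=g$. If $n$ is even then $n/g$ is even, the trace vanishes identically on $\F_{2^g}$, yielding $\#\left(V\cap\F_{2^n}\right)=2^{g+1}$ and $c_n(f)=g+1$.

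The computation is essentially routine; the only step that needs care is the transitivity argument together with the correct tracking of the parity of $n/g$ (equivalently of $n$, since $g$ is always odd). As a consistency check I would verify the general constraint $c_N(f)=2d$: here $R=x^{2q_0}+x^{q_0}$ has degree $2^{h+1}$, so $d=h+1$; taking $n=N=4h+2$ (even) gives $g=\pgcd(4h+2,2h+1)=2h+1$, whence $c_N(f)=2h+2=2d$, in agreement with Definition \ref{decomp}.
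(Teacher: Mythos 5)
Your proof is correct. It starts from the same structural observation as the paper --- the factorization $\widetilde{R}=(x^q+x)\circ(x^2+x)$, whence $\Ker\widetilde{R}=\wp^{-1}(\F_q)$ with $\wp(x)=x^2+x$ --- but the counting mechanism differs from what the paper (very tersely) intends. The paper identifies $\Ker\widetilde{R}$ as the $\F_2$-span of $\F_q\cup\F_4$ inside $\F_{q^2}$, so its implicit computation of $c_n(f)$ amounts to intersecting $\F_q+\F_4$ with $\F_{2^n}$: one gets $\F_{2^g}$ when $n$ is odd and $\F_{2^g}+\F_4$ (of dimension $g+1$, since $g$ is odd) when $n$ is even, where $g=\pgcd(n,2h+1)$, the dichotomy being whether $\F_4\subseteq\F_{2^n}$. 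You instead never compute the span: you count fibres of $\wp$ over $\F_q\cap\F_{2^n}=\F_{2^g}$ using the Artin--Schreier identity $\wp(\F_{2^n})=\Ker\bigl(\Tr_{\F_{2^n}/\F_2}\bigr)$ together with $\Tr_{\F_{2^n}/\F_2}(z)=\frac{n}{g}\Tr_{\F_{2^g}/\F_2}(z)$ for $z\in\F_{2^g}$, so the parity dichotomy appears as the parity of $n/g$. Both arguments are elementary and of comparable length; yours trades the subspace-generation step (and the modular-law intersection it requires) for a trace-transitivity computation, and it has the merit of making the two-case structure of the lemma transparent. Your treatment of the decomposition field (squeezing it between the compositum $\F_q\cdot\F_4=\F_{q^2}$ and the a priori bound $\Ker\widetilde{R}\subseteq\F_{q^2}$, the latter because each root satisfies a degree-$2$ Artin--Schreier equation over $\F_q$) matches the paper's intent, and your consistency check $c_N(f)=2d=2h+2$ is apt. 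One housekeeping remark: you have silently, and correctly, repaired the typo in the statement, whose second case should of course read \emph{if $n$ is even}.
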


We now evaluate some of the sums $S_n(f)$. First note than when $n$ divides $2h+1$, the field $\F_{2^n}$ is contained in $\F_q$, and we have $f(x)=0$ for all $x$ in $\F_{2^n}$. We deduce immediately the first assertion of the following

\begin{lemma}
\label{suzeval}
Notations are as above. We have 
\begin{itemize}
	\item[(1)] if $n$ divides $2h+1$, then $S_n(f)=2^n$;
	\item[(2)] if $n=4d$, where $d$ divides $2h+1$, then $S_n(f)=\chi(d)\chi(2h+1)2^{\frac{5d+1}{2}}$
\end{itemize}
\end{lemma}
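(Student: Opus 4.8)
The plan is to split the two assertions, handle the first by direct substitution, and reduce the second to the determination of a single sign, which will be the crux.

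Assertion (1) is immediate: if $n\mid 2h+1$ then $\F_{2^n}\subseteq\F_q$, so every $x\in\F_{2^n}$ has $x^q=x$ and hence $f(x)=x^{q_0}(x^q+x)=0$, giving $S_n(f)=\sum_{x\in\F_{2^n}}1=2^n$. I would record the by-product that $\varepsilon_n(f)=1$ for every odd $n\mid 2h+1$: there $c_n(f)=\pgcd(n,2h+1)=n$, so $S_n(f)=\varepsilon_n(f)2^n=2^n$. For assertion (2), writing $n=4d$ with $d\mid 2h+1$, Lemma~\ref{suzker} gives $c_{4d}(f)=\pgcd(4d,2h+1)+1=d+1$, whence by Proposition~\ref{expquad} $S_{4d}(f)=\varepsilon_{4d}(f)\,2^{(4d+d+1)/2}=\varepsilon_{4d}(f)\,2^{(5d+1)/2}$. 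Everything therefore reduces to proving $\varepsilon_{4d}(f)=\chi(d)\chi(2h+1)$.

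Next I would locate the example inside Proposition~\ref{percar2}, writing $N=4h+2=2N'$ with $N'=2h+1$ and $v_2(N)=1$. The radical $\rad(b_N)=\Ker\widetilde{R}=\{x:\ x^2+x\in\F_q\}$ has $\F_q$ as an index-two subgroup, the other coset being $\{x:\ x^q=x+1\}$; there $f(x)=x^{q_0}$ and a short trace computation gives $q_N(x)=\Tr_{\F_{q^2}/\F_2}(x^{q_0})=\Tr_{\F_q/\F_2}(1)=1$, since $2h+1$ is odd. By Remark~\ref{zeroinv} the restriction of $q_N$ to its radical is nonzero, so $\varepsilon_N(f)=0$ and we are in case (iii). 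To decide between (iiia) and (iiib) I would use the corollary following Proposition~\ref{expmodd}: since $S_1(f)=2\neq 0$, the odd sums do not all vanish, so $v_2(D)\geq 3$; as $D\in\{2N,4N\}=\{4N',8N'\}$ this forces $D=4N$, i.e. $\varepsilon_{2N}(f)=1$, case (iiib), with all root orders of dyadic valuation $3$.

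The key structural point is that $4d$ is an \emph{irreducible} base value. Since $v_2(4d)=2=v_2(N)+1$ and $4d\mid 2N$, Proposition~\ref{moddneven}(iiib) yields only the tautology $\varepsilon_{4d}(f)=\varepsilon_{\pgcd(4d,2N)}(f)=\varepsilon_{4d}(f)$, while the systems of Proposition~\ref{multodd+} merely re-express the even sums through the (free) positive multiplicities; thus $\varepsilon_{4d}(f)$ cannot be recovered from the odd data and must be computed by hand. Repeating the radical analysis over $\F_{2^{4d}}$, I would check that $\rad(b_{4d})=\{x\in\F_{2^{4d}}:\ x^2+x\in\F_{2^d}\}$ again has an index-two subgroup $\F_{2^d}$, and that on the complementary coset the value $\Tr_{\F_{2^{4d}}/\F_2}(x^{q_0})$ collapses because $x^{2^{dj}}=x+(j\bmod 2)\,\Tr_{\F_{2^d}/\F_2}(x^2+x)$ makes the four $\F_{2^d}$-conjugates cancel. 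Hence $q_{4d}\big|_{\rad}\equiv 0$ and $\varepsilon_{4d}(f)=\pm 1$, consistent with Lemma~\ref{houhou}(2).

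It remains to pin the sign, and this is the heart of the matter: $\varepsilon_{4d}(f)$ is exactly the Arf invariant of $q_{4d}$ on the nondegenerate quotient $\F_{2^{4d}}/\rad(b_{4d})$, on which the triviality established above gives no leverage. The hard part will be evaluating the Weil sum $S_{4d}(f)=\sum_{x\in\F_{2^{4d}}}(-1)^{\Tr(x^{2q_0+1}+x^{q_0+1})}$ directly: writing $2h+1=ds$ with $s$ odd and reducing the Frobenius exponents $h$ and $h+1$ modulo $4d$, the form becomes an explicit Dembowski--Ostrom quadratic form whose type is governed by $s\bmod 8$. The expected value is the sign $\chi(s)=\chi\!\big((2h+1)/d\big)=\chi(d)\chi(2h+1)$, the reduction modulo $8$ being precisely what introduces the character $\chi$; this is consistent with the direct check $S_4(f)=-8=\chi(3)\cdot 2^3$ for $h=d=1$. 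The single genuine obstacle is thus this quadratic-Gauss-sum (equivalently Arf-invariant) evaluation, the one place where the elementary arithmetic of the earlier sections does not suffice and a true sign must be produced.
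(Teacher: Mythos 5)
Your handling of assertion (1) and your reduction of assertion (2) are correct and match the paper's framework: part (1) is exactly the observation the paper makes just before the lemma, Lemma \ref{suzker} indeed gives $c_{4d}(f)=d+1$ and hence $S_{4d}(f)=\varepsilon_{4d}(f)\,2^{\frac{5d+1}{2}}$, your computation that $q_N$ takes the value $\Tr_{\F_q/\F_2}(1)=1$ on the coset $\{x:\ x^q=x+1\}$ of the radical (so $\varepsilon_N(f)=0$) is right, and your derivation of $D=4N$ from $S_1(f)=2\neq 0$ via the corollary after Proposition \ref{expmodd} is a nice, non-circular shortcut --- the paper goes the other way, extracting $\varepsilon_{2N}(f)=1$ from this very lemma. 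You also correctly diagnose that Propositions \ref{moddneven} and \ref{multodd+} yield only a tautology at $n=4d$, so the sign must come from an independent computation.

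But that computation is precisely what you do not carry out, and it is the entire content of assertion (2) beyond the modulus $2^{\frac{5d+1}{2}}$. Your last paragraph explicitly defers the evaluation: ``the expected value is the sign $\chi(s)$,'' supported only by the single check $h=d=1$ and an unexecuted plan (reduce Frobenius exponents modulo $4d$, analyse a Dembowski--Ostrom form ``governed by $s\bmod 8$''), so the proof stops at its crux. The paper closes exactly this gap by a concrete descent: choosing $\alpha\in\F_4$, $\beta\in\F_{16}$ with $\alpha^2+\alpha=1$, $\beta^2+\beta=\alpha$, the set $\{1,\alpha,\beta,\alpha\beta\}$ is an $\F_{2^d}$-basis of $\F_{2^{4d}}$ because $d$ is odd; writing $x=x_0+\alpha x_1+\beta x_2+\alpha\beta x_3$ one computes $\Tr_{\F_{2^{4d}}/\F_{2^d}}(f(x))=x_1^{q_0}x_3+x_1x_3^{q_0}+x_2^{q_0+1}+x_2x_3^{q_0}+\epsilon x_3^{q_0+1}$ with $\epsilon$ encoding $\chi(2h+1)$; orthogonality in $x_1$ forces $x_3\in\F_2$; the sum attached to $x_2\mapsto x_2^{q_0+1}$ vanishes by a radical argument for $R_1(x)=x^{q_0}$; and the surviving sum $\sum_{x_2\in\F_{2^d}}(-1)^{\Tr_{\F_{2^d}/\F_2}(x_2^{q_0+1}+x_2)}$ is evaluated as $\chi(d)2^{\frac{d+1}{2}}$ by citing \cite[Corollary 3]{lgw}. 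Without either executing your sketch in full or invoking such a known evaluation of this Kloosterman-type quadratic sum, the identity $\varepsilon_{4d}(f)=\chi(d)\chi(2h+1)$ --- and with it assertion (2) --- remains unproved.
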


\begin{proof}
We first choose $\alpha\in \F_4$ and $\beta\in \F_{16}$ such that $\alpha^2+\alpha=1$ and $\beta^2+\beta=\alpha$. Then $\{1,\alpha,\beta,\alpha\beta\}$ is a basis for the $\F_2$-vector space $\F_{16}$, and since $d$ is odd, it remains a basis for the $\F_{2^d}$-vector space $\F_{2^n}$.

Thus, for any $x\in \F_{2^n}$, we can write $x=x_0+\alpha x_1+\beta x_2+\alpha\beta x_3$ where $(x_0,x_1,x_2,x_3)\in \F_{2^d}^4$. After some calculations, we get
\[
\Tr_{\F_{2^n}/\F_{2^d}}(f(x))=x_1^{q_0}x_3+x_1x_3^{q_0}+x_2^{q_0+1}+x_2x_3^{q_0}+\epsilon x_3^{q_0+1}
\]
where $\epsilon=0$ if $\chi(2h+1)=1$ and $\epsilon=1$ if $\chi(2h+1)=-1$. Putting this into the sum, we get
\begin{eqnarray*}
S_n(f) & = & \sum_{(x_0,x_1,x_2,x_3)\in \F_{2^d}^4} \psi\left(\Tr_{\F_{2^d}/\F_{2}}(x_1^{q_0}x_3+x_1x_3^{q_0}+x_2^{q_0+1}+x_2x_3^{q_0}+\epsilon x_3^{q_0+1})\right) \\
 & = & 2^d \sum_{(x_2,x_3)\in \F_{2^d}^2} \psi\left(\Tr_{\F_{2^d}/\F_{2}}(x_2^{q_0+1}+x_2x_3^{q_0}+\epsilon x_3^{q_0+1})\right)S(x_3)\\
\end{eqnarray*}
where we have set $S(x_3)=\sum_{x_1\in \F_{2^d}} \psi\left(\Tr_{\F_{2^d}/\F_{2}}(x_1^{q_0}x_3+x_1x_3^{q_0})\right)$. Now since we have $\Tr_{\F_{2^d}/\F_{2}}(x_1^{q_0}x_3)=\Tr_{\F_{2^d}/\F_{2}}(x_1x_3^{2q_0})$, we deduce from an orthogonality relation that the sum $S(x_3)$ is zero, except when $x_3\in\F_2$, and then $S(x_3)=2^d$. We get 
\begin{eqnarray*}
S_n(f) & = & 2^{2d} \left(\sum_{x_2\in \F_{2^d}} \psi\left(\Tr_{\F_{2^d}/\F_{2}}(x_2^{q_0+1})\right)+\sum_{x_2\in \F_{2^d}} \psi\left(\Tr_{\F_{2^d}/\F_{2}}(x_2^{q_0+1}+x_2+\epsilon)\right)\right)\\
& = & 2^{2d} \left(\sum_{x_2\in \F_{2^d}} \psi\left(\Tr_{\F_{2^d}/\F_{2}}(x_2^{q_0+1})\right)+\chi(2h+1)\sum_{x_2\in \F_{2^d}} \psi\left(\Tr_{\F_{2^d}/\F_{2}}(x_2^{q_0+1}+x_2)\right)\right)\\
\end{eqnarray*}
The first sum is associated to the polynomial $xR_1(x)$, with $R_1(x)=x^{q_0}$. The roots of the polynomial $\widetilde{R}_1$ are the elements of the field $\F_{q_0^2}$, and since $d$ divides $2h+1$, the only roots in $\F_{2^d}$ are the elements in $\F_2$. Now since $d$ is odd the restriction of the quadratic form $\psi\left(\Tr_{\F_{2^d}/\F_{2}}(x_2^{q_0+1})\right)$ to $\F_2$ is non trivial, and the first sum is zero.

Finally, we apply \cite[Corollary 3]{lgw} to the second sum: it is equal to $\chi(d)2^{\frac{d+1}{2}}$, and this gives the desired result.
\end{proof}

With these results at hand, we are able to determine all sums $S_n(f)$. From Lemma \ref{suzker}, it is sufficient to give the invariants $\varepsilon_n(f)$

\begin{proposition}
\label{suzsum}
Recall that $f(x)=x^{q_0}(x^q+x)$. Then for all $n\geq 1$ we have
\begin{itemize}
	\item[(1)] if $n$ is odd, then 
	\[
	\varepsilon_n(f)=\chi\left(n\pgcd(n,2h+1)\right)
	\]
	\item[(2)] if $n$ is even, then 
	\[
	\varepsilon_n(f)=\left\{\begin{array}{rl} 0 & \textrm{if $v_2(n)=1$} \\
 \chi\left((2h+1)\pgcd(n,2h+1)\right) & \textrm{if $v_2(n)=2$} \\
 -\chi\left((2h+1)\pgcd(n,2h+1)\right) & \textrm{if $v_2(n)\geq 3$} \\ \end{array}\right.
	\]
\end{itemize}
\end{proposition}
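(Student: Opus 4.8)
The plan is to determine the invariant $\varepsilon_n(f)$ by combining the general relations already established in Propositions \ref{moddnodd} and \ref{moddneven} with the two explicit sum evaluations of Lemma \ref{suzeval}. The key point is that $m=1$ is odd here, and $\widetilde{R}$ has decomposition field $\F_{q^2}=\F_{2^{2(2h+1)}}$, so in the notation of Definition \ref{decomp} we have $N=2(2h+1)$, hence $a=v_2(N)=1$ and $N'=2h+1$. To apply the odd-$m$ propositions I must first pin down which case of Proposition \ref{percar2} we are in, i.e. compute $\varepsilon_N(f)$ and, if needed, $\varepsilon_{2N}(f)$.

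First I would establish the base invariants. Lemma \ref{suzeval}(1) with $n=2h+1$ dividing itself gives $S_{2h+1}(f)=2^{2h+1}$; since $2h+1$ is odd and $c_{2h+1}(f)=\pgcd(2h+1,2h+1)=2h+1$, formula $S_n(f)=\varepsilon_n(f)2^{(n+c_n(f))/2}$ forces $\varepsilon_{2h+1}(f)=1$, and more generally from Lemma \ref{suzeval}(1) every odd $n\mid 2h+1$ gives $\varepsilon_n(f)=1=\chi(n\cdot n)$, consistent with the target formula on that subset. Next, to compute $\varepsilon_N(f)$ and $\varepsilon_{2N}(f)$, I would use Lemma \ref{suzeval}(2): for $n=4d$ with $d\mid 2h+1$ the value $S_{4d}(f)=\chi(d)\chi(2h+1)2^{(5d+1)/2}$ together with $c_{4d}(f)=\pgcd(4d,2h+1)+1=d+1$ yields $\varepsilon_{4d}(f)=\chi(d)\chi(2h+1)$ after matching exponents $\frac{4d+d+1}{2}=\frac{5d+1}{2}$. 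In particular, taking $d=2h+1$ gives $\varepsilon_{4(2h+1)}(f)=\chi(2h+1)^2=1\neq 0$; since $4(2h+1)=2N$, this is $\varepsilon_{2N}(f)$. I must also determine $\varepsilon_N(f)=\varepsilon_{2(2h+1)}(f)$, which has $v_2=1$: this is the case $v_2(n)=v_2(N)-\text{?}$ — in fact $N$ itself has $v_2(N)=1$, so I would argue $\varepsilon_N(f)=0$ (this falls under case (iiib) below, where forms with $v_2(n)\le v_2(N)$ have trivial invariant), placing us in \textbf{case (iii)} with $\varepsilon_N(f)=0$, then sub-case \textbf{(iiib)} since $\varepsilon_{2N}(f)=1$.

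With the case fixed, the two formulas follow mechanically. For odd $n$, Proposition \ref{moddnodd} gives directly
\[
\varepsilon_n(f)=\chi\bigl(n\pgcd(n,N')\bigr)\varepsilon_{\pgcd(n,N')}(f)=\chi\bigl(n\pgcd(n,2h+1)\bigr),
\]
using $N'=2h+1$ and the base fact that $\varepsilon_{d}(f)=1$ for every odd $d\mid 2h+1$. For even $n$, Proposition \ref{moddneven}(iiib) splits according to $v_2(n)$ relative to $v_2(N)=1$: if $v_2(n)=1\le v_2(N)$ then $\varepsilon_n(f)=0$; if $v_2(n)=2=v_2(N)+1$ then $\varepsilon_n(f)=\varepsilon_{\pgcd(n,2N)}(f)$; and if $v_2(n)\ge 3=v_2(N)+2$ then $\varepsilon_n(f)=-\varepsilon_{\pgcd(n,2N)}(f)$. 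In the last two cases $\pgcd(n,2N)$ is a multiple of $4$ dividing $2N=4(2h+1)$, so writing $\pgcd(n,2N)=4\pgcd(n,2h+1)$ (valid once $v_2(n)\ge 2$) and invoking the evaluation $\varepsilon_{4d}(f)=\chi(d)\chi(2h+1)$ computed above with $d=\pgcd(n,2h+1)$ gives exactly $\chi\bigl((2h+1)\pgcd(n,2h+1)\bigr)$, with the sign flip in the $v_2(n)\ge 3$ branch.

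The main obstacle I anticipate is the careful bookkeeping of $\pgcd(n,2N)$ and its $2$-adic part in the even case: I must verify that for $v_2(n)\ge 2$ one has $\pgcd(n,2N)=4\pgcd(n,2h+1)$ (using $v_2(2N)=2$ and that $2h+1$ is odd), so that the index landing in Lemma \ref{suzeval}(2) is genuinely of the form $4d$ with $d\mid 2h+1$. A secondary subtlety is justifying $\varepsilon_N(f)=0$ rigorously rather than merely asserting it from the case structure — I would confirm this either by a direct computation of $S_N(f)$ showing $v_2(S_N(f))>\tfrac{N+c_N(f)}{2}$ forces the vanishing invariant, or by noting that the consistency of the whole scheme (Proposition \ref{percar2} demands $\varepsilon_N(f)\in\{0,\pm1\}$ with $\varepsilon_{2N}(f)=1$ already ruling out cases (i) and (ii)) pins it down; the cleanest route is the direct evaluation. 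Once these two verifications are in place, everything else is substitution into the established propositions.
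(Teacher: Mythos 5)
Your proposal is correct and follows essentially the same route as the paper: Lemma \ref{suzker} gives $N=2(2h+1)$, Lemma \ref{suzeval} gives the base invariants $\varepsilon_d(f)=1$ for $d\mid 2h+1$ and $\varepsilon_{4d}(f)=\chi(d)\chi(2h+1)$, and Propositions \ref{moddnodd} and \ref{moddneven} (case (iiib)), together with the identity $\pgcd(n,2N)=4\pgcd(n,2h+1)$ for $v_2(n)\geq 2$, yield the two formulas exactly as in the paper. Your worry about rigorously justifying $\varepsilon_N(f)=0$ is already settled by your second route, which is precisely the paper's implicit argument: since cases (i), (ii) and (iiia) of Proposition \ref{percar2} all force $\varepsilon_{2N}(f)=-1$, the computed value $\varepsilon_{2N}(f)=1$ alone pins down case (iiib), hence $\varepsilon_N(f)=0$ and $D=4N$, so the ``direct evaluation'' of $S_N(f)$ you contemplate is unnecessary (and is not supplied by the paper's lemmas).
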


\begin{proof}
From Lemma \ref{suzker}, the degree of the decomposition field of $\widetilde{R}$ over $\F_2$ is $N=4h+2$. From Lemma \ref{suzeval} (2), we have $\varepsilon_{2N}(f)=1$, and we deduce from Proposition \ref{percar2} that $D=4N$.

Now assertion (1) comes readily from Proposition \ref{moddnodd}.

From Lemma \ref{suzeval}, we have $\varepsilon_{4d}(f)=\chi\left((2h+1)d\right)$ for all divisors of $2h+1$. Now assertion (2) is a consequence of Proposition \ref{moddneven} since for all $n$ such that $v_2(n)\geq 2$, we have $\pgcd(n,2N)=4\pgcd(n,2h+1)$.
\end{proof}

We deduce some results on the factorization of the modified $L$-function below. They are an immediate consequence of the above result, and of Propositions \ref{multodd+} (iiib) and \ref{expmodd} (2).

\begin{proposition}
For $f$ as above, the only non zero multiplicities are among those $M_{8\ell}^\pm(f)$, $\ell|2h+1$.

Moreover, we have $M_{8\ell}^-(f)=\chi(\ell)\chi(2h+1)M_{8\ell}^+(f)$, and the $M_{8\ell}^+(f)$ are the solutions of the system
\[
A(2h+1)\left(M_{8\ell}^+(f)\right)_{\ell|2h+1}=\chi(2h+1)\left(\chi(d)2^{\frac{d-3}{2}}\right)_{d|2h+1}
\]
\end{proposition}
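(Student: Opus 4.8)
The plan is to specialize the general machinery of Section \ref{odd} to the data already computed for $f$, and then to match the two linear systems it produces. From the proof of Proposition \ref{suzsum} we know that the decomposition field of $\widetilde R$ has degree $N=4h+2=2(2h+1)$ over $\F_2$, so in the notation $N=2^aN'$ we have $a=1$ and $N'=2h+1$; moreover $\varepsilon_N(f)=0$, $\varepsilon_{2N}(f)=1$ and $D=4N$, placing us in case (iiib) of Proposition \ref{percar2}. In that case all root orders divide $D=8(2h+1)$ and have dyadic valuation $v_2(N)+2=3$, hence every $\ell$ carrying a nonzero multiplicity is of the form $\ell=8\ell'$ with $\ell'\mid 2h+1$. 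Together with Proposition \ref{expmodd} (1), which forces $M_\ell^-(f)=0$ whenever $v_2(\ell)\neq 3$, this yields the first assertion.

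First I would record the modified sums. Since the base field is $\F_2$ we have $m=1$, so $S_n^\ast(f)=2^{-n/2}S_n(f)$. Lemma \ref{suzeval} then gives, for every $d\mid 2h+1$,
\[
S_{4d}^\ast(f)=2^{-2d}\chi(d)\chi(2h+1)2^{\frac{5d+1}{2}}=\chi(d)\chi(2h+1)2^{\frac{d+1}{2}},
\]
and, for every odd $n\mid 2h+1$, $S_n^\ast(f)=2^{-n/2}\cdot 2^n=2^{n/2}$. Substituting the first family into the system of Proposition \ref{multodd+} (iiib) with $a=1$, namely $2^{a+1}A(N')(M_{2^{a+2}\ell}^+(f))_{\ell\mid N'}=(S_{2^{a+1}d}^\ast(f))_{d\mid N'}$, and dividing through by $2^{a+1}=4$, produces exactly the stated system $A(2h+1)(M_{8\ell}^+(f))_{\ell\mid 2h+1}=\chi(2h+1)(\chi(d)2^{\frac{d-3}{2}})_{d\mid 2h+1}$.

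It then remains to extract the relation between the negative and positive multiplicities from the system of Proposition \ref{expmodd} (2),
\[
2\sqrt 2\,\Delta(N')A(N')\Delta(N')\bigl(M_{8\ell}^-(f)\bigr)_{\ell\mid N'}=\bigl(S_n^\ast(f)\bigr)_{n\mid N'}.
\]
The key manipulation is that $\Delta(N')$ is diagonal with entries $\chi(\ell')=\pm 1$, so $\Delta(N')^2$ is the identity; moving the outer copy of $\Delta(N')$ to the right and dividing by $2\sqrt 2=2^{3/2}$ turns this into $A(2h+1)(\chi(\ell)M_{8\ell}^-(f))_{\ell\mid 2h+1}=(\chi(n)2^{\frac{n-3}{2}})_{n\mid 2h+1}$. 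Comparing this with the positive system above, whose right-hand side is $\chi(2h+1)$ times the same vector, and invoking the invertibility of $A(2h+1)$ recalled in \ref{evalsum}, I obtain $\chi(\ell)M_{8\ell}^-(f)=\chi(2h+1)M_{8\ell}^+(f)$, which is the asserted identity $M_{8\ell}^-(f)=\chi(\ell)\chi(2h+1)M_{8\ell}^+(f)$ since $\chi(\ell)^{-1}=\chi(\ell)$.

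The structural content here is supplied entirely by the cited propositions, so the only real obstacle is bookkeeping: keeping the powers of $2$ and of $\sqrt 2$ straight when passing between $S_n(f)$ and $S_n^\ast(f)$, and correctly transporting the diagonal matrices $\Delta(N')$ across $A(N')$. Once those are handled, the proportionality of the two right-hand sides together with the invertibility of $A(2h+1)$ closes the argument immediately.
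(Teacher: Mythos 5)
Your argument is correct and is precisely the paper's own proof written out in full: the paper simply cites Proposition \ref{suzsum} together with Propositions \ref{multodd+} (iiib) and \ref{expmodd} (2), and you carry out exactly that specialization ($a=1$, $N'=2h+1$, $D=8(2h+1)$), with the powers of $2$, the modified sums $S_{4d}^\ast(f)=\chi(d)\chi(2h+1)2^{\frac{d+1}{2}}$ and $S_n^\ast(f)=2^{n/2}$ for $n\mid 2h+1$, and the reduction via $\Delta(N')^2=I$ all handled correctly. The one caveat is not yours but the paper's: your sign for $M_{8\ell}^-(f)$ (like the proposition's) rests on the right-hand side of Proposition \ref{expmodd} (2) as printed, which differs by a sign from the system $\sum_{\ell'\mid N'}M_{8\ell'}^-(f)\,\sigma_{8\ell'}(n)=-S_n^\ast(f)$ displayed immediately before it, so if that minus sign is the correct one, both the statement and your derived relation would flip to $M_{8\ell}^-(f)=-\chi(\ell)\chi(2h+1)M_{8\ell}^+(f)$.
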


We end with the determination of the number of rational points of the curve $S_h$ over any extension of $\F_2$

\begin{proposition}
For any integer $n\geq 1$, we have
\[
\#S_h(\F_{2^n})=2^n+1+(2^{\pgcd(n,2h+1)}-1)S_n(f)
\]
\end{proposition}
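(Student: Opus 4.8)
The plan is to count the affine points of $S_h$ over $\F_{2^n}$ by fibering the projection $(x,y)\mapsto x$, and then to add the unique point at infinity. Fix $x\in\F_{2^n}$ and consider the additive map $\phi\colon\F_{2^n}\to\F_{2^n}$, $\phi(y)=y^q+y$. Being $\F_2$-linear, its kernel is $\F_q\cap\F_{2^n}=\F_{2^g}$ with $g:=\pgcd(n,2h+1)$, so every nonempty fiber of $\phi$ has exactly $2^g$ elements. Since the adjoint of $\phi$ for the trace form $\Tr_{\F_{2^n}/\F_2}(xy)$ is $y\mapsto y^{q^{-1}}+y$, whose kernel is again $\F_{2^g}$, the value $f(x)$ lies in the image of $\phi$ precisely when $\Tr_{\F_{2^n}/\F_{2^g}}(f(x))=0$. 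Hence the number of affine points equals $2^g N_0$, where $N_0:=\#\{x\in\F_{2^n}:\Tr_{\F_{2^n}/\F_{2^g}}(f(x))=0\}$.

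Next I would expand the indicator of $\Tr_{\F_{2^n}/\F_{2^g}}(f(x))=0$ through the additive characters of $\F_{2^g}$ and use transitivity of the trace, obtaining
\[
N_0=\frac{1}{2^g}\sum_{b\in\F_{2^g}}\ \sum_{x\in\F_{2^n}}(-1)^{\Tr_{\F_{2^n}/\F_2}(b\,f(x))}.
\]
The term $b=0$ contributes $2^n$; the heart of the matter is to prove that every term with $b\in\F_{2^g}^{\times}$ equals $S_n(f)$.

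For this I would exploit the quasi-homogeneity of $f(x)=x^{q_0+q}+x^{q_0+1}$. Since $\lambda\in\F_{2^g}^{\times}\subset\F_q^{\times}$ satisfies $\lambda^q=\lambda$, a direct computation gives $f(\lambda x)=\lambda^{q_0+1}f(x)$. The step I expect to be the real obstacle is showing that $\lambda\mapsto\lambda^{q_0+1}$ is a bijection of $\F_{2^g}^{\times}$, equivalently $\pgcd(q_0+1,2^g-1)=1$: as $g\mid 2h+1$ forces $\pgcd(g,2h)=1$, one gets $\pgcd(2^{2h}-1,2^g-1)=2^{\pgcd(2h,g)}-1=1$, and since $2^h+1\mid 2^{2h}-1$ the claim follows. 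Granting the bijection, for each $b\in\F_{2^g}^{\times}$ pick $\lambda$ with $\lambda^{q_0+1}=b$ and substitute $x\mapsto\lambda^{-1}x$, so that $\sum_x(-1)^{\Tr(b f(x))}=\sum_x(-1)^{\Tr(f(\lambda x))}=S_n(f)$.

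Combining these, $N_0=2^{-g}\bigl(2^n+(2^g-1)S_n(f)\bigr)$, and so the affine point count is $2^n+(2^g-1)S_n(f)$. Finally, since the place at infinity of $S_h$ is totally ramified in the covering and yields a single rational point over every $\F_{2^n}$, adding $1$ gives the stated formula $\#S_h(\F_{2^n})=2^n+1+(2^{\pgcd(n,2h+1)}-1)S_n(f)$.
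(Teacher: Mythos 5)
Your proof is correct and follows essentially the same route as the paper: both arguments reduce the affine point count to $\sum_{b\in\F_q\cap\F_{2^n}}S_n(bf)$ via additive character orthogonality for the fibers of $y\mapsto y^q+y$ (the paper states this as a solution-count formula for $y^q+y=t$, you via the image of the map and characters of $\F_{2^g}$), and both then use the quasi-homogeneity $f(\lambda x)=\lambda^{q_0+1}f(x)$ for $\lambda\in\F_q$ to identify each nonzero twist with $S_n(f)$. The only harmless variation is that you verify $\pgcd(q_0+1,2^g-1)=1$ directly by a gcd computation on Mersenne-type numbers, whereas the paper invokes the stronger (and also true) fact $\pgcd(q_0+1,q-1)=1$.
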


\begin{proof}
First observe that the equation $y^q+y=t$ has $\sum_{z\in \F_{2^n}\cap\F_q} \psi\circ\Tr_{\F_{2^n}/\F_2} (tz)$ solutions for any $t\in \F_{2^n}$.

We deduce that the number of affine rational points over $\F_{2^n}$ of the curve $S_h$ is
\[
\sum_{z\in \F_{2^n}\cap\F_q} S_n(zf)
\]
When $z=0$, the sum is $2^n$. When $z\neq 0$, we remark that for any $t\in \F_q$, we have $f(tx)=t^{q_0+1}f(x)$. Since $q_0+1$ is prime to $q-1$, for any $z\in \F_q\cap\F_{2^n}$, there exists an unique $t\in \F_q\cap\F_{2^n}$ such that $t^{q_0+1}=z$. Thus we have $S_n(zf)=S_n(f)$, and this is the desired result
\end{proof}

\bibliographystyle{plain}
\bibliography{SommesQuad}

\end{document}